\definecolor{MyDarkBlue}{rgb}{0,0.08,0.50}
\definecolor{BrickRed}{rgb}{0.65,0.08,0}
\newcommand{\pr}{\rightarrow}
\newcommand{\ba}{\begin{array}}
\newcommand{\ea}{\end{array}}
\newcommand{\vart}{\vartheta}
\newcommand{\varp}{\phi}
\newcommand{\eps}{\varepsilon}
\newenvironment{inspring}[1]%
{\begin{list}{}{\setlength{\rightmargin}{0cm}
                \setlength{\listparindent}{0cm}
                \settowidth{\labelwidth}{\mbox{#1}}
                \setlength{\leftmargin}{1.1\labelwidth}
                \setlength{\labelsep}{.1\labelwidth}}}%
{\end{list}}
\newcommand{\bi}[1]{\begin{inspring}{#1}}
\newcommand{\ei}{\end{inspring}}
\newcommand{\dprod}{\displaystyle \prod}
\newcommand{\beq}{\begin{equation}}
\newcommand{\eq}{\end{equation}}
\font\tenmsa=msam10 \font\sevenmsa=msam7 \font\fivemsa=msam5
\font\tenmsb=msbm10 \font\sevenmsb=msbm7 \font\fivemsb=msbm5
\def\Bbb{\ifmmode\let\next\Bbb@\else
 \def\next{\errmessage{Use \string\Bbb\space only in math mode}}\fi\next}
\def\Bbb@#1{{\Bbb@@{#1}}}
\def\Bbb@@#1{\fam\msbfam#1}
\newcommand{\dR}{{\Bbb R}}
\newcommand{\dC}{{\Bbb C}}
\newtheorem{thm}{Theorem}
\newtheorem{prop}[thm]{Proposition}
\newtheorem{lem}[thm]{Lemma}
\theoremstyle{definition}
\theoremstyle{remark}
\numberwithin{thm}{section}
\numberwithin{equation}{section}
\newcommand{\MM}{\chi}
\newcommand{\e}{{\rm e}}
\newcommand{\eqan}[1]{\begin{align} #1 \end{align}}
\newcommand{\sA}{ {\ensuremath{x}} } 
\newcommand{\sB}{ {\ensuremath{y}} } 
\newcommand{\stateSpace}{ \Omega }
\newcommand{\vC}[2]{ #1_{#2} }
\newcommand{\seq}[2]{ #1^{(#2)} }
\newcommand{\indicator}[1]{ \mathds{1} [ #1 ] }
\newcommand{\process}[2]{ #1 }
\newcommand{\bigO}[1]{ O(#1) }
\newcommand{\naturalNumbersZero}{ \mathbb{N}_0 }
\newcommand{\realNumbers}{ \mathbb{R} }
\newcommand{\refFigure}[1]{{\textrm{Figure~\ref{#1}}}}
\newcommand{\refTable}[1]{{\textrm{Table~\ref{#1}}}}
\newcommand{\refEquation}[1]{{\textrm{\eqref{#1}}}}
\newcommand{\refTheorem}[1]{{\textrm{Theorem~\ref{#1}}}}
\newcommand{\refProposition}[1]{{\textrm{Proposition~\ref{#1}}}}
\newcommand{\refLemma}[1]{{\textrm{Lemma~\ref{#1}}}}
\newcommand{\refSection}[1]{{\textrm{\S\ref{#1}}}}
\newcommand{\refAppendixSection}[1]{{\textrm{\ref{#1}}}}
\begin{document}

\title{Scaled control in the QED regime}

\date{July 4, 2013}

\author{A.J.E.M.\ Janssen}

\author{J.S.H.\ van Leeuwaarden}

\author{Jaron Sanders\footnote{Electronic addresses: \texttt{a.j.e.m.janssen@tue.nl}, \texttt{j.s.h.v.leeuwaarden@tue.nl}, and \texttt{jaron.sanders@tue.nl}}}


\affil{Department of Mathematics \& Computer Science, Eindhoven University of Technology, P.O.\ Box 513, 5600 MB Eindhoven, The Netherlands}


\makeglossaries
\newacronym{QED}{QED}{Quality-and-Efficiency-Driven}
\newacronym{OU}{OU}{Ornstein-Uhlenbeck}
\newacronym{EM}{EM}{Euler-Maclaurin}

\maketitle

\begin{abstract}
We develop many-server asymptotics in the \gls{QED} regime for models with admission control. The admission control, designed to reduce the incoming traffic in periods of congestion, scales with the size of the system. For a class of Markovian models with this scaled control, we identify the \gls{QED} limits for two stationary performance measures. We also derive corrected \gls{QED} approximations, generalizing earlier results for the Erlang B, C and A models. These results are useful for the dimensioning of large systems equipped with an active control policy. In particular, the corrected approximations can be leveraged to establish the optimality gaps related to square-root staffing and asymptotic dimensioning with admission control.

\vspace{1em}

\noindent \textit{Keywords:} scaled control, \gls{QED} regime, Halfin-Whitt regime, queues in heavy traffic, diffusion process, asymptotic analysis 

\noindent \textit{2010 MSC:} 60K25, 60J60, 60J70, 34E05
\end{abstract}

\section{Introduction} \label{sec1}

Many-server systems have the capability of combining large capacity with high utilization while maintaining satisfactory system performance. This potential for achieving economies of scale is perhaps most pronounced in the \gls{QED} regime, or Halfin-Whitt regime. Halfin and Whitt \cite{halfinwhitt} were the first to study the \gls{QED} regime for the $GI/M/s$ system. Assuming that customers require an exponential service time with mean $1$, the \gls{QED} regime refers to the situation that the arrival rate of customers $\lambda$ and the numbers of servers $s$ are increased in such a way that the traffic intensity $\rho = \lambda/s$ approaches one and
\begin{equation}\label{1.1}
(1-\rho)\sqrt{s}\rightarrow \gamma, \quad \gamma\in\mathbb{R}.
\end{equation}

The scaling \refEquation{1.1} is effective because the probability of delay converges to a non-degenerate limit away from both zero and one. Limit theorems for other, more general systems are obtained in \cite{garnett,jelenkovic,maglaraszeevi,manmom,reed}, and in all these cases, the limiting probability of delay remains in the interval $(0,1)$. In fact, not only the probability of delay, but many other performance characteristics or objective functions are shown to behave (near) optimally in the \gls{QED} regime, see for example \cite{borst,gans}. An important reason for this near optimal behavior are the relatively small fluctuations of the queue-length process.

This can be understood in the following way. Let $X_s(t)=(Q_s(t)-s)/\sqrt{s}$ denote a sequence of normalized processes, with $Q_s(t)$ the process describing the number of customers in the system over time. When $X_s(t)>0$, it is equal to the scaled total number of customers in the queue, whereas when $X_s(t)<0$, it is equal to the scaled number of idle servers. Halfin and Whitt showed for the $GI/M/s$ system how under \refEquation{1.1}, $X_s(t)$ converges to a diffusion process $X(t)$ on $\realNumbers$, that behaves like a Brownian motion with drift above zero and like an \gls{OU} process below zero, and that has a non-degenerate stationary distribution. This shows that the natural scale of $Q_s(t)-s$ is of the order $\sqrt{s}$. More precisely, the queue length is of the order $\sqrt{s}$, as well as the number of idle servers.

This paper adds to the \gls{QED} regime the feature of state-dependent control, by considering a control policy that lets an arriving customer enter the system according to some probability depending on the queue length. In particular, a customer meeting upon arrival $k$ other waiting customers is admitted with probability $p_s(k)$, and we allow for a wide range of such control policies characterized by $\{p_s(k)\}_{k\in\naturalNumbersZero}$ with $\naturalNumbersZero=\{0,1,\dots\}$. An important property of this control is that it is allowed to scale with the system size $s$. Consider for example {\it finite-buffer control}, in which new customers are rejected when the queue length equals $N$, so that $p_s(k)=1$ for $k<N$ and $p_s(k)=0$ for $k\geq N$. A finite-capacity effect in the \gls{QED} regime that is neither dominant nor negligible occurs when $N \approx \eta\sqrt{s}$ with $\eta > 0$, because the natural scale of the queue length is $\sqrt{s}$. A similar threshold in the context of many-server systems in the \gls{QED} regime has been considered in \cite{armonymaglaras,masseywallace,whittrigor,whittprox}.

We introduce a class of \gls{QED}-specific control policies $\{p_s(k)\}_{k\in\naturalNumbersZero}$ designed, like the finite-buffer control, to control the fluctuations of $Q_s(t)$ around $s$. To this end, we consider control policies for which $p_s( x \sqrt{s} ) \approx 1-a(x)/\sqrt{s}$ when $x > 0$. Here, $a$ denotes a non-negative and non-decreasing function. While almost all customers are admitted as $s \to \infty$, this control is specifically designed for having a decisive influence on the system performance in the \gls{QED} regime. We also provide an in-depth discussion of two canonical examples. The first is {\it modified-drift control} given by $a(x)= \vart >-\gamma$, which is shown to effectively change the \gls{QED} parameter $\gamma$ in \refEquation{1.1} into $\gamma + \vart$. The second example is {\it Erlang A control} given by $a(x)=\vart x$, for which the system behavior is shown to be intimately related with that of the Erlang A model in which waiting customers abandon the system after an exponential time with mean $1/\vart$.

Our class of \gls{QED}-specific control policies stretches much beyond these two examples. In principle, we can choose the control such that, under Markovian assumptions, the stochastic-process limit for the normalized queue-length process changes the Brownian motion in the upper half plane (corresponding to the system without control), into a diffusion process with drift $-\gamma -a(x)$ in state $x\geq 0$. We give a formal proof of this process-level result.

We next consider the controlled \gls{QED} system in the stationary regime and derive the \gls{QED} limits for the probability of delay and the probability of rejection. Typically, such results can be obtained by using the central limit theorem and case-specific arguments, see for example \cite{garnett,halfinwhitt,jelenkovic,masseywallace}. However, we take a different approach, aiming for new asymptotic expansions for the probability of delay and the probability of rejection. The first terms of these expansions are the \gls{QED} limits, and the higher-order terms are refinements to these \gls{QED} limits for finite $s$. This generalizes earlier results on the Erlang B, C and A models \cite{erlangb,erlangc,erlanga}.

Conceptually, we develop a unifying approach to derive such expansions for these control policies. A crucial step in our analysis is to rewrite the stationary distribution in terms of a Laplace transform that contains all specific information about the control policy. Mathematically, establishing the expansions requires an application of \gls{EM} summation, essentially identifying the error terms caused by replacing a series expression in the stationary distribution by the Laplace transform. In this paper we focus on the probability of delay and the probability of rejection, but it is fairly straightforward using the same approach to obtain similar results for other characteristics of the stationary distribution, such as the mean and the cumulative distribution function.

The paper is structured as follows. In \refSection{sec2} we introduce the many-server system with admission control and derive the stability condition under which the stationary distribution exists. In \refSection{sec3} we discuss in detail the \gls{QED} scaled control. We introduce a {\it global} control for managing the overall system fluctuations, and a {\it local} control that entails a precise form of $p_s(k)$. For both the global and local control, we derive the stability condition and the stochastic-process limit for the normalized queue-length process in terms of a diffusion process. In \refSection{sec4} we derive \gls{QED} approximations for systems with global control. Hereto, we
enroll our concept of describing the stationary distribution in terms of a Laplace transform and using \gls{EM} summation to derive the expansions. In \refSection{sec5} we derive \gls{QED} approximations for local control, making heavy use of the intimate connection with global control and the tools developed in \refSection{sec4}. For demonstrational purposes we also provide some numerical results for the Erlang A control. In \refSection{sec6} we discuss the potential applications of the results obtained in this paper.

\section{Many-server systems with admission control} \label{sec2}

Consider a system with $s$ parallel servers to which customers arrive according to a Poisson process with rate $\lambda$. The service times of customers  are assumed exponentially distributed with mean $1$. A control policy dictates whether or not a customer is admitted to system. A customer that finds upon arrival $k$ other waiting customers in the system is allowed to join the queue with probability $p_s(k)$ and is rejected with probability $1-p_s(k)$. In this way, the sequence $\{p_s(k)\}_{k\in\naturalNumbersZero}$ defines the control policy. Since we are interested in large, many-server systems, working at critical load and hence serving many customers, the probability $p_s(k)$ should be interpreted as the fraction of customers admitted in state $s + k$.

Under these Markovian assumptions, and assuming that all interarrival times and service times are mutually independent, this gives rise to a  birth--death process  $Q_s(t)$  describing the number of customers in the system over time. The birth rates are $\lambda$ for states $k=0,1,\ldots ,s$ and $\lambda\cdot p_s(k-s)$ for states $k=s,s+1, \ldots \,$. The death rate in state $k$ equals $\min{ \{ k,s \} }$ for states $k=1, 2, \ldots \,$. Assuming the stationary distribution to exist, with $\pi_k=\lim_{t\pr\infty}\mathbb{P}(Q_s(t)=k)$, it follows from solving the balance equations that
\begin{equation} \label{2.1}
\pi_k=\left\{\ba{lll}
\pi_0 \frac{(s\rho)^k}{k!}, & ~~~k=1,2, \ldots, s, \\
\pi_0 \frac{ s^s \rho^k }{ s! } \dprod_{i=0}^{k-s-1}\,p_s(i),  & ~~~k=s+1,s+2, \ldots .
\ea\right.
\end{equation}
Here
\begin{equation} \label{2.2}
\rho=\frac{\lambda}{s},\quad \pi_0^{-1}=\sum_{k=0}^s\,\frac{(s\rho)^k}{k!}+\frac{(s\rho)^s}{s!}F_s(\rho)
\end{equation}
with
\begin{equation} \label{2.3}
F_s(\rho)=\sum_{n=0}^{\infty}\,p_s(0)\cdot \ldots \cdot p_s(n)\,\rho^{n+1}.
\end{equation}

\subsection{Stability} \label{subsec2.1}

The wide class of allowed control policies renders it necessary to carefully investigate the precise conditions under which the controlled system is stable. From \refEquation{2.1}--\refEquation{2.3}, we conclude that the stationary distribution exists if and only if $\{p_s(k)\}_{k\in\naturalNumbersZero}$ and $\rho$ are such that $F_s(\rho)<\infty$. Let
\begin{equation} \label{2.4}
P_s := \underset{ n \pr \infty }{ \limsup } \, \left( p_s(0)\cdot \ldots \cdot p_s(n) \right)^{\frac{1}{n+1}},
\end{equation}
and set $1/P_s=\infty$ when $P_s=0$. We then see that $F_s(\rho) < \infty$ when
\begin{equation} \label{2.5}
0\leq\rho<\frac{1}{P_s}.
\end{equation}
For convenience, we henceforth assume that
\begin{equation} \label{2.6}
\lim_{\rho\uparrow 1/P_s}\,F_s(\rho)=\infty,
\end{equation}
so that the stationary distribution exists if and only if \refEquation{2.5} holds. The case  $\lim_{\rho\uparrow 1/P_s}\,F_s(\rho)<\infty$ (as considered for example in \cite{ref1,ref2}) can also be considered in the present context, but leads to some complications that distract attention from the bottom line of the exposition.

\subsection{Performance measures} \label{subsec2.2}

We consider in this paper two performance measures, viz.\ the stationary probability $D_s(\rho)$ that an arriving customer finds all servers occupied, and the stationary probability $D_s^R(\rho)$ that an arriving customer is rejected. In terms of $\pi_k$ and $p_s(k)$, these stationary probabilities are given by $D_s(\rho) = \sum_{k=s}^{\infty} \pi_k$ and $D_s^R(\rho) = \sum_{k=s}^{\infty} \pi_k(1-p_s(k))$.
Denoting the Erlang B formula by
\begin{equation} \label{2.21}
B_s(\rho)=\frac{(s\rho)^s/s!}{\sum_{k=0}^s\,(s\rho)^k/k!},
\end{equation}
we express $D_s(\rho)$ and $D_s^R(\rho)$ in terms of $B_s(\rho)$ and $F_s(\rho)$ as
\begin{equation} \label{2.22}
D_s(\rho)=\frac{1+F_s(\rho)}{B_s^{-1}(\rho)+F_s(\rho)}
\end{equation}
and
\begin{equation} \label{2.23}
D_s^R(\rho)=\frac{1+(1-\rho^{-1})\,F_s(\rho)}{B_s^{-1}(\rho)+F_s(\rho)}.
\end{equation}

There are two extreme control policies. The first is the control that denies all customers access whenever all servers are occupied, i.e.\ $p_s(k)=0$ for $k\in\naturalNumbersZero$. This is in fact the Erlang B system. Then, $F_s(\rho)=0$ and \refEquation{2.22}, \refEquation{2.23} indeed give $D_s(\rho) = D_s^R(\rho) = B_s(\rho)$. The other is the control that allows all customers access, i.e.\ $p_s(k)=1$ for $k\in\naturalNumbersZero$, known as the Erlang C system. 
Equation \refEquation{2.3} gives $F_s(\rho)=\rho/(1-\rho)$ for $0 \leq \rho < 1$.
Subsequently, \refEquation{2.23} gives $D_s^R(\rho)=0$ (a customer is never rejected) and expression \refEquation{2.22} reduces to the Erlang C formula
\begin{equation} \label{2.25}
C_s(\rho)=\frac{\frac{(s\rho)^s}{s!(1-\rho)}}{\sum_{k=0}^{s-1}\,\frac{(s\rho)^k}{k!}+\frac{(s\rho)^s}{s! \,(1-\rho)}}.
\end{equation}

\section{\glsentrytext{QED} scaled control}\label{sec3}

To enforce the \gls{QED} regime in \refEquation{1.1} we henceforth couple $\lambda$ and $s$ according to
\begin{equation} \label{2.8}
\rho=\frac{\lambda}{s}=1-\frac{\gamma}{\sqrt{s}}\Leftrightarrow\lambda=s-\gamma\,\sqrt{s},\quad  \gamma\in\dR.
\end{equation}
We next introduce two types of control, referred to as \emph{global} and \emph{local} control, both designed to reduce the incoming traffic in periods of congestion.

\subsection{Global control}
Recall \refEquation{2.3} and let
\begin{equation} \label{2.10}
q_s(n):=p_s(0)\cdot \ldots \cdot p_s(n), \quad n \in \naturalNumbersZero
\end{equation}
be the coefficient of $\rho^{n+1}$ in $F_s(\rho)$. For $n \in \naturalNumbersZero$, $q_s(n)$ is roughly equal to the probability that a (fictitious) batch arrival of $n$ customers is allowed as a whole to enter the system, given that all servers are busy and that the waiting queue is empty. Since in the \gls{QED} regime queue lengths are of the order $\sqrt{s}$, it is natural to consider control policies such that $q_s(n)$ scales with $s$ in a $\sqrt{s}$-manner as well. One way to achieve this is by choosing $q_s(n)$ of the form
\begin{equation} \label{2.11}
q_s(n)=f\Bigl( \frac{n+1}{\sqrt{s}} \Bigr), \quad n \in \naturalNumbersZero
\end{equation}
for $s\geq1$, where $f(x)$, henceforth referred to as {\it scaling profile}, is a non-negative, non-increasing function of $x\geq0$ with $f(0)=1$. With global control we mean that the admission control is defined through $q_s(n)$ in \eqref{2.11}.
A key example is what we have called modified-drift control, in which case
\begin{equation} \label{mdc}
p_s(k)=p^{\frac{1}{\sqrt{s}}}, \ p\in(0,\infty), \quad q_s(n)=p^{\frac{n+1}{\sqrt{s}}}=f\Bigl(\frac{n+1}{\sqrt{s}}\Bigr) \ {\rm with} \ f(x)=p^x.
\end{equation}
It appears that many practical admission policies fit into the  Ansatz \refEquation{2.11}, or do so in a limit sense as $s\pr\infty$. This is the case for the class of local control, as discussed next.

\subsection{Local control}

While the global control is defined via $q_s(n)$, we also introduce a local control, that for each state $k$, defines the probability of admitting a new customer as
\begin{equation} \label{1.2}
p_s(k)=\frac{1}{1+\frac{1}{\sqrt{s}}a(\frac{k+1}{\sqrt{s}})}, \quad k \in \naturalNumbersZero,
\end{equation}
with $a(x)$ a non-negative, non-decreasing function of $x\geq 0$. A special case is Erlang A control $a(x)=\vart x$, which gives $p_s(k)=1 / (1+(k+1) \vart / s )$. In this case the stationary distribution is identical to that of an $M/M/s+M$ system (or Erlang A model), with the feature that customers that are waiting in the queue abandon the system after exponentially distributed times with mean $1/\vart$. Garnett et al.~\cite{garnett} obtained the diffusion limit for the Erlang A model in the \gls{QED} regime, and the limiting diffusion process turned out to be a combination of two \gls{OU} processes with different restraining forces, depending on whether the process is below or above zero.

Note that setting $a(x)=0$ leads to the ordinary $M/M/s$ system considered in \cite{halfinwhitt} with in the \gls{QED} regime as limiting process a Brownian motion in the upper half plane. Depending on $a$, i.e.~the type of control, one gets a specific limiting behavior in the upper half plane, described by Brownian motion, an \gls{OU} process, or some other type of diffusion process with drift $- \gamma - a(x)$ in state $x \geq 0$. We give a formal proof of this process-level convergence in \refSection{sec:main}.

\subsection{Connection between local and global control}\label{conn}


There is a fundamental relation between local and global control. By substituting \refEquation{1.2} into \refEquation{2.10},  rewriting the product and using Taylor expansion, we see that
\begin{align} \label{6.2}
q_s(n) & =  p_s(0)\cdot \ldots \cdot p_s(n)=\exp\Bigl({-}\sum_{k=0}^n\,{\rm ln}\Bigl(1+\frac{1}{\sqrt{s}}\,a\Bigl( \frac{k+1}{\sqrt{s}}\Bigr)\Bigr)\Bigr) \nonumber \\
&=  \exp\Bigl(\frac{-1}{\sqrt{s}}\:\sum_{k=0}^n\,a\Bigl(\frac{k+1}{\sqrt{s}}\Bigr)+ O\Bigl(\frac1s\: \sum_{k=0}^n\,a^2\Bigl(\frac{k+1}{\sqrt{s}}\Bigr)\Bigr)\Bigr), \quad n \in \naturalNumbersZero.
\end{align}
For large $s$ and under mild conditions on $a$, the last expression in (\ref{6.2}) can be approximated by
\begin{equation} \label{6.3}
\exp\Bigl({-}\,\int_0^{\frac{n+1}{\sqrt{s}}}\,a(y)\,dy+O\Bigl(\frac{1}{\sqrt{s}}\:\int_0^{\frac{n+1}{\sqrt{s}}}\, a^2(y)\,dy\Bigr)\Bigr),
\end{equation}
which will be discussed in more detail in \refSection{logl}.
We get the approximation
\begin{equation} \label{6.4}
q_s(n) \approx f\Bigl(\frac{n+1}{\sqrt{s}}\Bigr), \quad n \in \naturalNumbersZero,
\end{equation}
where
\begin{equation} \label{6.5}
f(x)=\exp\Bigl({-}\,\int_0^x\,a(y)\,dy\Bigr),\quad x\geq0.
\end{equation}

The validity range and the approximation error in (\ref{6.4}) depend on the particular form of $a$, which will be discussed in detail in \refSection{sec5}. Also, \refEquation{6.5} implies that $f$ and $a$ are related as
\begin{equation} \label{2.12}
a(x)={-}\,\frac{f'(x)}{f(x)},\quad x\geq0.
\end{equation}
From here onwards we assume that $f$ in \eqref{2.11} and $a$ in \eqref{1.2} are indeed related according to \refEquation{6.5} and \refEquation{2.12}. We can then show that both local and global control have a similar impact on a system, characterized by
\begin{equation}
\vC{p}{s}(k) \approx 1 - \frac{1}{\sqrt{s}}a \Bigl( \frac{k+1}{\sqrt{s}} \Bigr), \quad k \in \naturalNumbersZero. 
\label{eqn:Asymptotic_behavior_of_each_individual_control_probability}
\end{equation}

In \refSection{subsec2.2} we discussed how our class of control policies can cover the entire range between the Erlang B model and the Erlang C model.
Let us demonstrate that for the modified-drift control described in \refEquation{mdc} that admits a customer when all servers are busy with probability $p^{1 / \sqrt{s}}$, where $p \in (0,1)$. \refFigure{fixedact} shows for fixed $\rho=0.99$ the delay probability $D_s(\rho)$ as a function of $p$. Here we show both global control $f(x) = p^x$ and the local control counterpart $a(x) = -\ln p$. Notice the relatively small difference between global and local control, which would be even smaller for larger values of $s$. 

\begin{figure}[!hbtp]
\begin{center}
\begin{tikzpicture}
\node[anchor=south west,inner sep=0] at (0,0) {\includegraphics[width=3.3in, keepaspectratio]{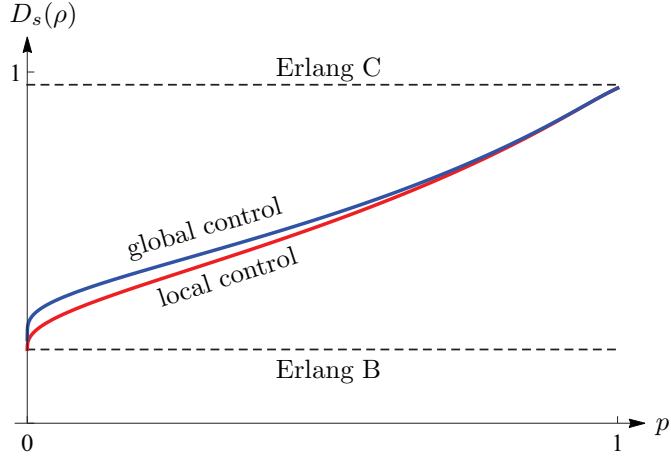}};
\node[anchor=south west,inner sep=0] at (8.5,0.3) {$p$};
\node[anchor=south west,inner sep=0] at (0,5.7) {$D_s(\rho)$};
\node[anchor=south west,inner sep=0] at (3.5,1.0) {Erlang B};
\node[anchor=south west,inner sep=0] at (3.5,5) {Erlang C};
\node[anchor=south west,inner sep=0,rotate=16] at (1.6,2.6) {global control};
\node[anchor=south west,inner sep=0,rotate=18] at (2,2) {local control};
\end{tikzpicture}
\caption{The stationary probability of delay for global control $f(x) = p^x$ and local control $a(x) = -\ln p$ for $s = 10$ and $\rho = 0.99$.}
\label{fixedact}
\end{center}
\end{figure}

\subsection{Stability with control}
Now that we have established the connection between global and local control via the relations \refEquation{6.5} and \refEquation{2.12}, we next show that the stability conditions for the systems with these respective controls are similar as well.

Define the Laplace transform of the scaling profile $f$ as
\begin{equation} \label{2.13}
{\cal L}(\gamma) = \int_0^{\infty} e^{-\gamma x} f(x) dx, \quad \gamma > \gamma_{\min},
\end{equation}
where $\gamma_{\min} = \inf\{\gamma\in\dR|{\cal L}(\gamma)<\infty\}$.
From \refEquation{6.5}, it follows that $\gamma_{\min} = - \lim_{x\to\infty} a(x)$,
and since $a$ is non-decreasing, we have
\begin{equation} \label{2.15}
\lim_{\gamma\downarrow\gamma_{\min}} {\cal L}(\gamma) = \infty.
\end{equation}

In \refAppendixSection{proofprop2.1}, we derive the following stability condition for global and local control in terms of $\gamma_{\min}$. For large $s$, the two stability conditions are almost identical.

\begin{prop}[Stability conditions] \label{prop2.1}
Assume \refEquation{6.5} and \refEquation{2.12}. The stationary distribution \refEquation{2.1} exists for
\begin{itemize}[noitemsep,nolistsep]
\item[{\rm (i)}] the global control \refEquation{2.11} if and only if $0 \leq \rho < e^{- \gamma_{\min} / \sqrt{s} } = 1 - \gamma_{\min} / \sqrt{s} + \bigO{ 1/s }$;
\item[{\rm(ii)}] the local control \refEquation{1.2} if and only if $0 \leq \rho < 1 - \gamma_{\min} / \sqrt{s}$.
\end{itemize}
\end{prop}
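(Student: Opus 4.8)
The plan is to reduce the claim, for both controls, to the convergence criterion for $F_s$ obtained in \refSection{subsec2.1}: under the standing assumption \refEquation{2.6}, the stationary distribution \refEquation{2.1} exists if and only if $F_s(\rho)<\infty$, and since $F_s(\rho)=\sum_{n\geq0}q_s(n)\rho^{n+1}$ is a power series with non-negative coefficients, this is in turn equivalent to $0\leq\rho<1/P_s$, with $P_s=\limsup_{n\to\infty}q_s(n)^{1/(n+1)}$ as in \refEquation{2.4}. So it suffices to locate, for each control, the threshold value of $\rho$ at which the positive-term series $F_s(\rho)$ stops converging (and, for (i), to append a one-line Taylor expansion). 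For $\rho\in(0,1/P_s]$ I write $\gamma:=-\sqrt{s}\ln\rho$ (the value $\rho=0$ being trivial), which recasts ``$\rho$ below threshold'' as ``$\gamma$ above $\gamma_{\min}$''; recall from the text that $\gamma_{\min}=-\lim_{x\to\infty}a(x)=:-L$, with $L\in[0,\infty]$ since $a$ is non-negative and non-decreasing.

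For part (i), global control, $q_s(n)=f((n+1)/\sqrt{s})$, and \refEquation{6.5} lets the general term factor cleanly as $q_s(n)\rho^{n+1}=\exp\bigl(-\int_0^{(n+1)/\sqrt{s}}(a(y)+\gamma)\,dy\bigr)$, using $\gamma\,(n+1)/\sqrt{s}=\int_0^{(n+1)/\sqrt{s}}\gamma\,dy$. The integrand $a(y)+\gamma$ is non-decreasing with limit $L+\gamma$. If $\gamma>\gamma_{\min}$, then $a(y)+\gamma\geq\varepsilon>0$ for $y$ beyond some $Y$, so $q_s(n)\rho^{n+1}\leq C\,e^{-\varepsilon(n+1)/\sqrt{s}}$ and $F_s(\rho)<\infty$; if $\gamma<\gamma_{\min}$, then $a(y)+\gamma<L+\gamma<0$ for every $y$, so $q_s(n)\rho^{n+1}\geq e^{(\gamma_{\min}-\gamma)(n+1)/\sqrt{s}}\to\infty$ and $F_s(\rho)=\infty$. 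The boundary $\gamma=\gamma_{\min}$, i.e.\ $\rho=e^{-\gamma_{\min}/\sqrt{s}}$, gives $F_s=\infty$ by \refEquation{2.6}. Thus $F_s(\rho)<\infty$ precisely for $\rho<e^{-\gamma_{\min}/\sqrt{s}}$, and $e^{-\gamma_{\min}/\sqrt{s}}=1-\gamma_{\min}/\sqrt{s}+O(1/s)$ is just $e^{t}=1+t+O(t^{2})$ with $t=-\gamma_{\min}/\sqrt{s}$ (when $\gamma_{\min}=-\infty$ both sides are $+\infty$, i.e.\ stability for all $\rho\geq0$). The same conclusion can be reached more conceptually by recognising $F_s(e^{-\gamma/\sqrt{s}})=\sum_{m\geq1}f(m/\sqrt{s})e^{-\gamma m/\sqrt{s}}$ as, up to the factor $\sqrt{s}$ and the monotonicity of $f$ (which traps it between two integrals), a Riemann sum for $\sqrt{s}\,{\cal L}(\gamma)$, finite exactly when $\gamma>\gamma_{\min}$ by \refEquation{2.15}.

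For part (ii), local control, I would work directly with $q_s(n)=\prod_{k=0}^{n}\bigl(1+\tfrac1{\sqrt{s}}a(\tfrac{k+1}{\sqrt{s}})\bigr)^{-1}$, rather than through the approximation \refEquation{6.4}, in order to get an exact ``if and only if''. Since $a$ is non-decreasing with supremum $L=-\gamma_{\min}$, every factor lies in $[1,\,1-\gamma_{\min}/\sqrt{s}]$, so $q_s(n)\rho^{n+1}\geq\bigl(\rho/(1-\gamma_{\min}/\sqrt{s})\bigr)^{n+1}$; for $\rho\geq1-\gamma_{\min}/\sqrt{s}$ the terms then do not vanish and $F_s(\rho)=\infty$. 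Conversely, if $\rho<1-\gamma_{\min}/\sqrt{s}$, then since $1+\tfrac1{\sqrt{s}}a(\tfrac{K+1}{\sqrt{s}})\uparrow1-\gamma_{\min}/\sqrt{s}$ as $K\to\infty$ we may fix $K$ with $c:=1+\tfrac1{\sqrt{s}}a(\tfrac{K+1}{\sqrt{s}})>\rho$; bounding the factors with $k<K$ below by $1$ and those with $k\geq K$ below by $c$ yields $q_s(n)\rho^{n+1}\leq c^{K}(\rho/c)^{n+1}$ for $n\geq K$, so $F_s(\rho)<\infty$. Hence $F_s(\rho)<\infty$ precisely for $\rho<1-\gamma_{\min}/\sqrt{s}$; the degenerate case $\gamma_{\min}=-\infty$ (e.g.\ Erlang~A control, $a$ unbounded) is covered by the same two estimates and gives stability for all $\rho\geq0$.

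The only genuinely non-routine step is the limiting input to part (i): that the sign of $\lim_{y\to\infty}(a(y)+\gamma)$ is governed exactly by the comparison of $\gamma$ with $\gamma_{\min}=-\lim_{x\to\infty}a(x)$ --- equivalently that $\tfrac1x\int_0^x a(y)\,dy\to-\gamma_{\min}$, equivalently that $f(m/\sqrt{s})^{1/m}\to e^{\gamma_{\min}/\sqrt{s}}$ --- which uses the monotonicity of $a$ (via the sandwich $\tfrac{x-Y}{x}a(Y)\leq\tfrac1x\int_0^x a\leq a(x)$), together with a careful handling of the boundary value $\rho=1/P_s$, which is precisely what the standing assumption \refEquation{2.6} (the series analogue of \refEquation{2.15}) supplies. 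With those in hand both parts are short, and the thresholds $e^{-\gamma_{\min}/\sqrt{s}}$ and $1-\gamma_{\min}/\sqrt{s}$ agree up to $O(1/s)$, matching the remark preceding the proposition.
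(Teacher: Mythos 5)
Your proposal is correct. Part (ii) is essentially the paper's own argument: bound every factor of $q_s(n)$ below by $(1+L/\sqrt{s})^{-1}$ with $L=-\gamma_{\min}$ to get divergence for $\rho\geq 1+L/\sqrt{s}$, and, below that threshold, choose $K$ so that the tail of $F_s(\rho)$ is dominated by a geometric series with ratio strictly less than one (the paper takes the ratio $1-\tfrac12(1-\rho/(1+\tfrac{1}{\sqrt{s}}L))$ where you take $\rho/c$ --- the same device), with the unbounded case handled "similarly" in both write-ups. For part (i) you take a genuinely different, somewhat more elementary route. The paper first proves the two-sided integral bound of \refProposition{prop3.2},
\begin{equation*}
e^{\gamma_s/\sqrt{s}}\int_{1/\sqrt{s}}^{\infty}e^{-\gamma_s x}f(x)\,dx\;\leq\;\frac{1}{\sqrt{s}}F_s(\rho)\;\leq\;e^{-\gamma_s/\sqrt{s}}\,{\cal L}(\gamma_s),
\end{equation*}
obtained from upper and lower Riemann sums via the monotonicity of $f$ and of $e^{-\gamma_s x}$ (stated for $\gamma\leq 0$, the complementary case being immediate), and then converts $F_s(\rho)<\infty$ into $\gamma_s>\gamma_{\min}$ through \refEquation{2.15}. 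You instead estimate the individual terms, writing $q_s(n)\rho^{n+1}=\exp\bigl(-\int_0^{(n+1)/\sqrt{s}}(a(y)+\gamma_s)\,dy\bigr)$ and using $a(y)\uparrow-\gamma_{\min}$ to obtain geometric decay of the terms when $\gamma_s>\gamma_{\min}$ and divergence of the terms when $\gamma_s<\gamma_{\min}$, settling the boundary with \refEquation{2.6} rather than \refEquation{2.15}. Both arguments rest on exactly the same monotonicity; yours dispenses with the auxiliary Laplace-transform comparison at the cost of not producing the quantitative sandwich on $F_s(\rho)$ (which the paper does not reuse, so nothing is lost), and your closing remark already identifies the paper's route as the Riemann-sum-for-$\sqrt{s}\,{\cal L}(\gamma_s)$ alternative.
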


\subsection{Stochastic-process limit}\label{sec:main}

We now derive using the local control in \refEquation{1.2} a stochastic-process limit, which provides additional insight into the roles of the function $a$ and the Laplace transform $\mathcal L$.

Let $Q_s(t)$ denote  the process describing the number of customers present in the system over time. The subscript $s$ is attached to all relevant quantities to denote their dependence on the size of the system. We obtain a scaling limit for the sequence of normalized processes $X_s(t)=(Q_s(t)-s)/\sqrt{s}$. Let ``$\Rightarrow$'' denote weak convergence in the space $D[0,\infty)$ or convergence in distribution. The next result is proved in \refAppendixSection{proof:dp}.

\begin{prop}[Weak convergence to a diffusion process]
\label{prop:Weak_convergence_to_a_diffusion_process}
Assume \refEquation{1.1} and \refEquation{1.2}. If $a$ is continuous and bounded on every compact subinterval $I$ of $\realNumbers$, and $X_s(0)\Rightarrow X(0)\in\mathbb{R}$, then for every $t\geq 0$, as $s\to \infty$,
\begin{equation}
X_s(t)\Rightarrow X(t),
\end{equation}
where the limit $X(t)$ is the diffusion process with infinitesimal drift $m(x)$ given by
\begin{equation} \label{eqn:Drift_of_the_diffusion_process}
m(x)=
\begin{cases}
-\gamma-x, & x<0, \\
-\gamma-a(x), & x \geq 0, \\
\end{cases}
\end{equation}
and constant infinitesimal variance $\sigma^2(x)=2$.
\end{prop}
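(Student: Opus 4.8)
The plan is to prove \refProposition{prop:Weak_convergence_to_a_diffusion_process} by the classical generator/martingale-problem method for Markov processes, exploiting that each $X_s$ is a one-dimensional continuous-time Markov jump process with an explicitly known generator. (An equivalent route is to represent $Q_s$ through two time-changed Poisson processes and apply a functional central limit theorem together with the continuous-mapping theorem; I follow the generator route.) The process $X_s$ lives on the lattice $L_s=\{(k-s)/\sqrt s:k\in\naturalNumbersZero\}$ with mesh $h_s=1/\sqrt s$, and from state $x=(k-s)/\sqrt s$ it jumps to $x+h_s$ at rate $\beta_s(k)$ and to $x-h_s$ at rate $\delta_s(k)$, where $\beta_s(k)=\lambda$ for $k<s$, $\beta_s(k)=\lambda\,p_s(k-s)$ for $k\ge s$, and $\delta_s(k)=\min\{k,s\}$, consistently with \refEquation{2.1}. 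Hence, for $g\in C_c^\infty(\realNumbers)$,
\[
(A_sg)(x)=\beta_s(k)\bigl[g(x+h_s)-g(x)\bigr]+\delta_s(k)\bigl[g(x-h_s)-g(x)\bigr],
\]
and a third-order Taylor expansion gives $(A_sg)(x)=b_s(x)g'(x)+\tfrac12 a_s(x)g''(x)+R_s(x)$ with $b_s(x)=(\beta_s(k)-\delta_s(k))h_s$, $a_s(x)=(\beta_s(k)+\delta_s(k))h_s^2$, and $|R_s(x)|\le\tfrac16(\beta_s(k)+\delta_s(k))h_s^3\|g'''\|_\infty$.

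\textbf{Identification of the limit.} Next I would show, uniformly on each compact $I\subset\realNumbers$, that $b_s\to m$ and $a_s\to 2$, so that $A_sg\to Ag$ with $(Ag)(x)=m(x)g'(x)+g''(x)$. For $x<0$ and $s$ large, $\beta_s(k)=\lambda=s-\gamma\sqrt s$ and $\delta_s(k)=k=s+x\sqrt s$, whence $b_s(x)=-(\gamma+x)$; for $x\ge0$, $\delta_s(k)=s$ while by \refEquation{1.2} and the continuity and local boundedness of $a$ one has $\beta_s(k)=(s-\gamma\sqrt s)\bigl(1-a(x)/\sqrt s+\bigO{1/s}\bigr)$, so $b_s(x)=-\gamma-a(x)+\bigO{1/\sqrt s}$; in both regimes $a_s(x)=(\beta_s(k)+\delta_s(k))/s=2+\bigO{1/\sqrt s}$ and $R_s(x)=\bigO{1/\sqrt s}$. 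The point requiring care is that the two rates are each of order $s$ on $I$ while their difference is only of order $\sqrt s$: one must track this near-cancellation, and it is precisely here that the \gls{QED} coupling \refEquation{2.8} and the expansion \refEquation{eqn:Asymptotic_behavior_of_each_individual_control_probability} of $p_s$ enter.

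\textbf{Uniqueness, tightness, and conclusion.} Because $m$ is bounded on compacts and Borel measurable (continuous except for a downward jump of size $a(0)\ge0$ at the origin) and $\sigma^2\equiv 2$ is non-degenerate, the SDE $dX=m(X)\,dt+\sqrt2\,dW$ has a non-exploding pathwise-unique solution, so the $(A,C_c^\infty)$-martingale problem is well posed (alternatively one characterizes $X$ by its scale function and speed measure). For tightness of $\{X_s\}$ in $D[0,\infty)$ I would use that $g(X_s(\cdot))-\int_0^\cdot(A_sg)(X_s(u))\,du$ is a martingale with $\sup_s\sup_x|(A_sg)(x)|<\infty$ (the function $A_sg$ vanishes off an $h_s$-neighbourhood of $\operatorname{supp}g$ and is bounded there by the previous step), which yields tightness of $\{g(X_s)\}$; to upgrade this to tightness of $\{X_s\}$ one needs the compact-containment condition $\inf_s\mathbb P(X_s(t)\in K\ \forall t\le T)\ge1-\eta$, which I would obtain by stochastic domination: since $p_s(k)\le1$, a coupling on a common Poisson arrival stream makes $Q_s$ dominated above by the uncontrolled $M/M/s$ queue (whose \gls{QED}-scaled version is tight by \cite{halfinwhitt}), while below level $s$ the process $Q_s$ coincides in law with an $M/M/\infty$ queue of arrival rate $\lambda$, yielding a matching stochastic lower bound (a Lyapunov function such as $V(x)=\cosh(\theta x)$ with $A_sV\le C_1-C_2V$ outside a compact, uniformly in $s$, is an alternative). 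Feeding these ingredients and the hypothesis $X_s(0)\Rightarrow X(0)$ into the standard convergence theorem for martingale problems (Stroock--Varadhan / Ethier--Kurtz) gives $X_s\Rightarrow X$ in $D[0,\infty)$, and in particular $X_s(t)\Rightarrow X(t)$ for every $t\ge0$.

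\textbf{Main obstacle.} The chief difficulty is not the (short) generator computation but controlling the processes globally: the transition rates blow up like $s$, so the elementary limit theorems for uniformly bounded-rate chains do not apply, and the compact-containment estimate must be produced uniformly in $s$ by the domination argument above. A secondary, purely technical nuisance is the discontinuity of the limiting drift at the origin, which obliges one to invoke well-posedness of SDEs with merely bounded measurable drift rather than the classical Lipschitz theory.
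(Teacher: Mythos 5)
Your proof is essentially correct, but it follows a genuinely different route from the paper. The paper invokes Stone's theorem for birth--death processes (via \cite{Iglehart73}): it verifies only that the lattice state spaces $\vC{\stateSpace}{s}$ become dense in $\realNumbers$ and that the infinitesimal mean $\vC{m}{s}(x)$ and variance $\vC{\sigma}{s}^2(x)$, written explicitly in terms of the birth--death rates, converge uniformly on compacts to $m(x)$ and $2$; Stone's theorem then delivers weak convergence without any separate tightness or uniqueness argument. Your martingale-problem route performs the identical rate computation (your $b_s$, $a_s$ are exactly the paper's $\vC{m}{s}$, $\vC{\sigma}{s}^2$, including the crucial near-cancellation of the two $O(s)$ rates leaving an $O(\sqrt{s})$ drift), but then must additionally establish well-posedness of the limiting martingale problem with the drift discontinuous at the origin and a compact-containment estimate uniform in $s$. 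What your approach buys is independence from the specifically one-dimensional birth--death structure and a self-contained treatment; what the paper's approach buys is brevity, since Stone's theorem absorbs precisely the two items that cost you the most work. One caution on your tightness step: the upper domination by the uncontrolled $M/M/s$ queue is clean (since $p_s(k)\leq 1$), but the lower bound via ``coincides in law with $M/M/\infty$ below level $s$'' is not a stochastic domination in either direction as stated ($M/M/\infty$ has both larger birth and larger death rates above $s$), so you would need to argue via downward excursions from level $s$ or fall back on the Lyapunov-function alternative you mention; this is fillable but is the one loose joint in the argument.
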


\refProposition{prop:Weak_convergence_to_a_diffusion_process} sheds light on the effect of the control $\vC{p}{s}(k)$ as $s$ becomes large. It shows that for local control \refEquation{1.2}, which is asymptotically of the form \refEquation{eqn:Asymptotic_behavior_of_each_individual_control_probability},  the process $\vC{Q}{s}(t)$ approximately behaves as $s + X(t) \sqrt{s}$, where $X(t)$ is a diffusion process with drift $- \gamma - a(x)$ for $x \geq 0$ and an \gls{OU} process with drift $- \gamma - x$ for $x < 0$.

The stationary distribution of $X(t)$ is easy to derive. Denote the probability density function of the standard normal distribution by $\phi(x)$, and its cumulative distribution function by $\Phi(x) = \int_{-\infty}^x \phi(u) du$.

\begin{prop}[Stationary distribution of the diffusion process]
\label{prop:Stationary_distribution_for_diffusion_process}
The density function $\omega(x)$ of the stationary distribution for $X(t)$ is given by
\begin{equation}\label{222}
\omega(x)
=
\begin{cases}
C(\gamma)\frac{\phi(x+\gamma)}{\phi(\gamma)}, & x < 0, \\
C(\gamma)\exp(\int_{0}^x m(u)du), & x \geq 0, \\
\end{cases}
\end{equation}
with $C(\gamma)=\Big(\int_0^\infty\exp(\int_{0}^x m(u)du)dx +\frac{\Phi(\gamma)}{\phi(\gamma)}\Big)^{-1}$. Moreover,
\begin{equation}
\int_0^\infty \omega(x) dx = \frac{ {\cal L}(\gamma) }{ {\cal L}(\gamma) + \frac{\Phi(\gamma)}{\phi(\gamma)} }. \label{eqn:Probability_that_Xt_is_positive}
\end{equation}
\end{prop}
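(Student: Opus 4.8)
The plan is to identify $\omega$ as the invariant density of the scalar diffusion $X(t)$, whose generator is $(\mathcal G g)(x)=g''(x)+m(x)g'(x)$ since $\sigma^2(x)\equiv 2$. As $X(t)$ lives on all of $\realNumbers$ with no accessible boundary, its stationary density solves the forward (Fokker--Planck) equation $\frac{d}{dx}\big(\omega'(x)-m(x)\omega(x)\big)=0$ with vanishing probability flux at $\pm\infty$; hence $\omega'(x)=m(x)\omega(x)$, so $\omega(x)=\omega(0)\exp\!\big(\int_0^x m(u)\,du\big)$, which is automatically continuous at $x=0$ because the exponent vanishes there. Equivalently one may simply verify a posteriori that the claimed $\omega$ solves this ODE, or quote the standard formula for the speed density of a one-dimensional diffusion.

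I would then evaluate the exponent on each half-line. For $x<0$, $\int_0^x(-\gamma-u)\,du=-\gamma x-\tfrac12 x^2=\tfrac12\gamma^2-\tfrac12(x+\gamma)^2$, and with $\phi(y)=(2\pi)^{-1/2}e^{-y^2/2}$ this yields $\exp(\int_0^x m)=\phi(x+\gamma)/\phi(\gamma)$, the top branch of \eqref{222}. For $x\ge 0$, $\int_0^x(-\gamma-a(u))\,du=-\gamma x-\int_0^x a(u)\,du$, so \eqref{6.5} gives $\exp(\int_0^x m)=e^{-\gamma x}f(x)$, which is the bottom branch written in terms of $f$. The normalization $\int_{-\infty}^\infty\omega=1$ then reads $\omega(0)\big(\int_{-\infty}^0 \phi(x+\gamma)/\phi(\gamma)\,dx+\int_0^\infty\exp(\int_0^x m)\,dx\big)=1$; the first integral equals $\Phi(\gamma)/\phi(\gamma)$ after substituting $u=x+\gamma$, which identifies $\omega(0)=C(\gamma)$ precisely as stated.

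Finally, for \eqref{eqn:Probability_that_Xt_is_positive} I would reuse the computation on $[0,\infty)$: $\int_0^\infty\exp(\int_0^x m(u)\,du)\,dx=\int_0^\infty e^{-\gamma x}f(x)\,dx=\mathcal L(\gamma)$ by \eqref{2.13}, whence $\int_0^\infty\omega(x)\,dx=C(\gamma)\mathcal L(\gamma)=\mathcal L(\gamma)\big/\big(\mathcal L(\gamma)+\Phi(\gamma)/\phi(\gamma)\big)$.

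The only step requiring genuine care --- and the closest thing to an obstacle --- is well-posedness. One must check that the right branch is integrable, i.e.\ that $\mathcal L(\gamma)<\infty$; by the discussion around \eqref{2.13}--\eqref{2.15} this holds exactly when $\gamma>\gamma_{\min}$, the same condition governing stability in \refProposition{prop2.1}, so this is precisely the regime in which a stationary law exists. The left branch is always integrable because the drift $-\gamma-x$ is strongly mean-reverting as $x\to-\infty$. One should also record that $\pm\infty$ are natural boundaries not reached by $X(t)$, so that zero flux is the correct boundary condition and the invariant density is unique up to normalization; everything else reduces to the routine Gaussian integral identities indicated above.
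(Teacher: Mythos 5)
Your proof is correct and follows essentially the same route as the paper: the paper invokes the Browne--Whitt procedure for piecewise-continuous-parameter diffusions to write $\omega(x)\propto\exp(\int_0^x m(u)\,du)$ on each branch and then evaluates $\int_0^\infty \exp(\int_0^x m(u)\,du)\,dx=\int_0^\infty e^{-\gamma x}f(x)\,dx=\mathcal L(\gamma)$ exactly as you do. The only difference is cosmetic --- you derive the zero-flux stationary density from the Fokker--Planck equation directly where the paper cites \cite{brownewhitt}, and you add the (welcome) remark that integrability of the positive branch requires $\gamma>\gamma_{\min}$.
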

\begin{proof}
Since the diffusion process $X(t)$ has piecewise continuous parameters, we can apply the procedure developed in \cite{brownewhitt} to find the stationary distribution. This procedure consists of composing the density function as in \refEquation{222} based on the density function of a diffusion process with drift $-\gamma-a(x)$ for $x>0$ and of an \gls{OU} process with drift $-\gamma-x$ for $x<0$. The function $C(\gamma)$ normalizes the distribution.

Equation \refEquation{eqn:Probability_that_Xt_is_positive} follows after substituting \refEquation{eqn:Drift_of_the_diffusion_process} with $a(x)= - f'(x) / f(x)$ into \refEquation{222} and evaluating
\begin{equation}
\int_{0}^\infty \exp\Big(\int_{0}^x m(u)du\Big)dx=\int_0^{\infty}\,e^{-\gamma x}\,f(x)\,dx={\cal L}(\gamma),
\end{equation}
proving \refEquation{eqn:Probability_that_Xt_is_positive}.
\end{proof}


A natural approach now is to approximate the distribution of $\vC{Q}{s}(t)$ by the distribution of $s + X(t) \sqrt{s}$ when $s$ is large.
In \refSection{sec4}, specifically \refTheorem{thm4.10}, we show that \refEquation{eqn:Probability_that_Xt_is_positive} equals $\lim_{s \rightarrow \infty} \vC{D}{s}( \rho )$, as expected, and we also derive the most relevant correction terms for finite $s$. Important here is that $D_s(\rho)$ converges to a value in the interval $(0,1)$ as $s\pr\infty$, which confirms that the local control in \refEquation{1.2} leads to a non-degenerate limit. In \cite{ref2}, $s$-independent control policies have been considered for which $D_s(\rho)$ has $1 / \sqrt{s}$-behavior for large $s$.

\section{\glsentrytext{QED} approximations for global control} \label{sec4}

In this section we focus on global control defined by the scaling profile $f$ and Ansatz (\ref{2.11}).
For this type of control there is a convenient manner of approximating $F_s(\rho)$ as $s\pr\infty$ in terms of the Laplace transform of $f$. Define
\begin{equation} \label{2.16}
\gamma_s = {-}\sqrt{s} \ln{ (1-\gamma/\sqrt{s}) }, \quad \gamma\in \dR.
\end{equation}
Utilizing \refEquation{2.10} and \refEquation{2.11} and recalling that $\rho=1-\gamma/\sqrt{s}$, we can write \refEquation{2.3} as
\begin{equation} \label{2.17}
F_s(\rho)=\sum_{n=0}^{\infty}\,e^{-\frac{n+1}{\sqrt{s}}\gamma_s}\,f\Bigl(\frac{n+1}{\sqrt{s}}\Bigr),
\end{equation}
This expression for $F_s(\rho)$ is instrumental for our analysis.
We apply \gls{EM} summation to \refEquation{2.17}, in order to replace the summation over $n$ by an integral and an appropriate number of error terms. The approach is explained in \refSection{ems}, and leads to the \gls{QED} approximations for the stationary delay and rejection probability presented in \refSection{subsec4.1}. These approximations are demonstrated in \refSection{subsec4.2} for several types of global control.

\subsection{\glsentrytext{EM} summation}\label{ems}

We assume that the function $f(x)$ in (\ref{2.11}) is non-negative, non-increasing and smooth, that is $f\in C^4([0,\infty))$, and that $f(0)=1$. Furthermore, we assume that for any $\gamma>\gamma_{\min}$, $e^{-\gamma x}\,f^{(j)}(x)\in L_1([0,\infty))$ and $e^{-\gamma x}\,f^{(j)}(x)\pr0$ as $x\pr\infty$
for $j=0,1,2,3,4$.

We shall use the following form of the \gls{EM} summation formula about which more details are collected in \refAppendixSection{appB}. Assume that $g:[0,\infty)\pr\dR$ with $g\in C^2([0,\infty))$ and $g^{(j)}\in L^1([0,\infty))$, $j=0,1,2$. Then
\begin{equation} \label{4.2}
\sum_{n=0}^{\infty} g\Bigl(\frac{n+1}{\sqrt{s}}\Bigr) = \sqrt{s} \int_{\frac{1}{2\sqrt{s}}}^\infty  g(x) dx+\frac{1}{24\sqrt{s}} g'\Bigl(\frac{1}{2\sqrt{s}}\Bigr) +R,
\end{equation}
where
\begin{equation} \label{4.3}
|R| \leq \frac{1}{12\sqrt{s}} \int_0^{\infty} |g^{(2)}(x)| dx.
\end{equation}
When also $g^{(4)}\in L^1([0,\infty))$, we have the asymptotically tighter bound
\begin{equation} \label{4.4}
|R| \leq \frac{1}{384 s \sqrt{s}} \int_0^{\infty}\,|g^{(4)}(x)| dx.
\end{equation}
By setting $g(x)=e^{-\gamma x}f(x)$ for $x\geq0$ and $\gamma>\gamma_{\min}$, and using these formulas, we will now obtain several QED  approximations.

\subsection{Corrected \glsentrytext{QED} approximations} \label{subsec4.1}

We first present a result for $F_s(\rho)$.

\begin{thm} \label{thm4.1}
With $\rho=1-\gamma/\sqrt{s}$ and $\gamma_{\min}<\gamma\leq\sqrt{s}$,
\begin{equation} \label{4.5}
F_s(\rho) = \sqrt{s} {\cal L}(\gamma_s) - \frac12 + O\Bigl( \frac{1}{\sqrt{s}} \Bigr),
\end{equation}
where $\gamma_s={-}\sqrt{s} \ln{ (1-\gamma/\sqrt{s}) }$ and where $O(1/\sqrt{s})$ holds uniformly in any compact set of $\gamma$'s contained in $(\gamma_{\min},\infty)$.
\end{thm}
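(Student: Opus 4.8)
The plan is to apply the Euler--Maclaurin formula \refEquation{4.2}--\refEquation{4.4} to the function $g(x)=e^{-\gamma_s x}f(x)$, using the representation \refEquation{2.17} of $F_s(\rho)$, and then to convert the resulting integral from lower limit $1/(2\sqrt{s})$ to lower limit $0$, absorbing the difference into the error term.

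First I would check the hypotheses: by the standing assumptions in \refSection{ems}, $f\in C^4([0,\infty))$, $f(0)=1$, and $e^{-\gamma x}f^{(j)}(x)\in L^1([0,\infty))$ with $e^{-\gamma x}f^{(j)}(x)\to 0$ as $x\to\infty$ for $j=0,\dots,4$ whenever $\gamma>\gamma_{\min}$. Since $\gamma_s = -\sqrt{s}\ln(1-\gamma/\sqrt{s}) = \gamma + O(1/\sqrt{s}) > \gamma_{\min}$ for $s$ large (uniformly on compacta of $(\gamma_{\min},\infty)$, because $\gamma_s\downarrow\gamma$ as $s\to\infty$ and $\gamma_s\geq\gamma$ when $\gamma\geq 0$, with a routine monotonicity check for $\gamma<0$), the function $g(x)=e^{-\gamma_s x}f(x)$ satisfies $g\in C^4([0,\infty))$ and $g^{(j)}\in L^1([0,\infty))$ for $j=0,\dots,4$; indeed each derivative $g^{(j)}$ is a finite linear combination of terms $e^{-\gamma_s x}f^{(i)}(x)$, which are dominated by $e^{-\gamma' x}f^{(i)}(x)\in L^1$ for any fixed $\gamma'\in(\gamma_{\min},\gamma_s)$ once $s$ is large. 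Then \refEquation{4.2} with \refEquation{4.4} gives
\begin{equation*}
F_s(\rho) = \sqrt{s}\int_{\frac{1}{2\sqrt{s}}}^{\infty} e^{-\gamma_s x}f(x)\,dx + \frac{1}{24\sqrt{s}}\,g'\Bigl(\frac{1}{2\sqrt{s}}\Bigr) + R,\qquad |R|\leq \frac{1}{384\,s\sqrt{s}}\int_0^\infty |g^{(4)}(x)|\,dx.
\end{equation*}

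The next step is bookkeeping of the three pieces. For the integral, write $\sqrt{s}\int_{1/(2\sqrt{s})}^{\infty} = \sqrt{s}\int_0^\infty - \sqrt{s}\int_0^{1/(2\sqrt{s})}$. The first term is $\sqrt{s}\,{\cal L}(\gamma_s)$ by definition \refEquation{2.13}. For the second, since $g$ is continuous with $g(0)=f(0)=1$, a Taylor expansion gives $\sqrt{s}\int_0^{1/(2\sqrt{s})} g(x)\,dx = \sqrt{s}\bigl(\tfrac{1}{2\sqrt{s}}g(0) + O(1/s)\bigr) = \tfrac12 + O(1/\sqrt{s})$, which supplies the $-\tfrac12$ in \refEquation{4.5}. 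For the correction term, $g'(0) = f'(0) - \gamma_s f(0) = -a(0)-\gamma_s$ is bounded uniformly on compact $\gamma$-sets, so by continuity of $g'$ the term $\tfrac{1}{24\sqrt{s}}g'(1/(2\sqrt{s})) = O(1/\sqrt{s})$. Finally the remainder $R$ is $O(s^{-3/2})$ once we show $\int_0^\infty |g^{(4)}(x)|\,dx$ is bounded uniformly in $s$ on compact $\gamma$-sets, again by the domination argument above (bounding $e^{-\gamma_s x}$ by $e^{-\gamma' x}$). Collecting, $F_s(\rho) = \sqrt{s}\,{\cal L}(\gamma_s) - \tfrac12 + O(1/\sqrt{s})$.

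The main obstacle I anticipate is not any single estimate but making the error terms genuinely \emph{uniform} over compact subsets of $(\gamma_{\min},\infty)$, since $\gamma_s$ depends on both $\gamma$ and $s$. Concretely, one must verify that for a compact $K\subset(\gamma_{\min},\infty)$ there is $\gamma' > \gamma_{\min}$ and $s_0$ with $\gamma_s \geq \gamma'$ for all $\gamma\in K$ and $s\geq s_0$ (so the $L^1$ dominations hold simultaneously), and that the implied constants in the Taylor remainders for $\int_0^{1/(2\sqrt{s})} g$ and for $g'(1/(2\sqrt{s}))$ can be taken independent of $\gamma\in K$ — this follows from uniform bounds on $f$, $f'$, $f''$ near $0$ together with $\sup_{\gamma\in K}|\gamma_s|<\infty$. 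Everything else is routine substitution into \refEquation{4.2}.
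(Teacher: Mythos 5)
Your proposal is correct and follows essentially the same route as the paper: apply the \gls{EM} formula \refEquation{4.2} to $g(x)=e^{-\gamma_s x}f(x)$, split off $\int_0^{1/(2\sqrt{s})}g(x)\,dx\approx 1/(2\sqrt{s})$ to produce the $-\tfrac12$, and control the derivative and remainder terms uniformly by confining $\gamma_s$ to a compact subset of $(\gamma_{\min},\infty)$. The only (inessential) difference is that you invoke the fourth-order remainder bound \refEquation{4.4}, whereas the paper uses the cruder second-order bound \refEquation{4.3}, which already suffices for the $O(1/\sqrt{s})$ claim and reserves \refEquation{4.4} for \refTheorem{thm4.2}; note also that $\gamma_s\geq\gamma$ holds for all $\gamma\leq\sqrt{s}$, not only $\gamma\geq 0$, which slightly simplifies your uniformity check.
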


\begin{proof} We have from (\ref{2.17}), (\ref{4.2}) and (\ref{4.3}) that
\begin{equation} \label{4.6}
F_s(\rho) = \sqrt{s} \int_{\frac{1}{2\sqrt{s}}}^{\infty} e^{-\gamma_sx} f(x) dx + \frac{1}{24\sqrt{s}} (e^{-\gamma_sx} f(x))' \Bigl( \frac{1}{2\sqrt{s}} \Bigr) + R
\end{equation}
with
\begin{equation} \label{4.7}
|R| \leq \frac{1}{12\sqrt{s}} \int_0^{\infty} | (e^{-\gamma_sx} f(x))^{(2)}(x) | dx.
\end{equation}
Assume that $\gamma$ is restricted to a compact set $C$ contained in $(\gamma_{\min},\infty)$. Then $\gamma_s$ is restricted to a compact set $D$ contained in $(\gamma_{\min},\infty)$ for all $s \geq 1$ with $\sqrt{s} \geq 2 \max\{ | \gamma | \ | \ \gamma \in C \}$. Hence
\begin{equation} \label{4.8}
e^{-\gamma_sx} f(x) - 1 = O\Bigl(\frac{1}{\sqrt{s}}\Bigr), \quad 0 \leq x \leq \frac{1}{2\sqrt{s}},
\end{equation}
where $O(1/\sqrt{s})$ holds uniformly in $\gamma\in C$. Therefore, we can replace the integral at the right-hand side of (\ref{4.6}) by ${\cal L}(\gamma_s)-1/(2\sqrt{s})$, at the expense of an error $O(1/s)$ uniformly in $\gamma\in C$. Furthermore,
\begin{equation} \label{4.9}
(e^{-\gamma_sx} f(x))' \Bigl(\frac{1}{2\sqrt{s}}\Bigr) = O(1),\quad s \geq 1,
\end{equation}
uniformly in $\gamma\in C$ by smoothness of $f$. Finally, $R = O( 1/\sqrt{s} )$ uniformly in $\gamma \in C$ since there is the bound
\begin{equation} \label{4.10}
|R| \leq \frac{1}{12\sqrt{s}} \int_0^{\infty} e^{-\gamma_sx} ( |\gamma_s|^2 + 2 |\gamma_s| |f'(x)|+|f''(x)| ) dx
\end{equation}
in which $\gamma_s \in D$ with $f$ satisfying the assumptions made at the beginning of \refSection{ems}.
\end{proof}

\refTheorem{thm4.1} yields a simple and often accurate approximation of $F_s(\rho)$, which we illustrate using examples in \refSection{subsec4.2}. However, in the leading term $\sqrt{s}\,{\cal L}(\gamma_s)$, the dependence on the number of servers $s$ and the parameter $\gamma$ is combined into the single quantity $\gamma_s$.
 A more insightful result is given in \refTheorem{thm4.2} below, where the dependence of the approximating terms on $s$ and $\gamma$ is separated.

\begin{thm} \label{thm4.2}
With $\rho=1-\gamma/\sqrt{s}$ and $\gamma_{\min}<\gamma\leq\sqrt{s}$,
\begin{equation} \label{4.11}
F_s(\rho)=\sqrt{s}\,{\cal L}(\gamma)+{\cal M}(\gamma)+O\Bigl(\frac{1}{\sqrt{s}}\Bigr),
\end{equation}
where
\begin{equation} \label{4.12}
{\cal M}(\gamma)=\frac12\gamma^2\,{\cal L}'(\gamma)-\frac12,
\end{equation}
and where $O(1/\sqrt{s})$ holds uniformly in any compact set of $\gamma$'s contained in $(\gamma_{\min},\infty)$. In leading order, the $O(1/\sqrt{s})$ is given as ${\cal N}(\gamma)/\sqrt{s}$, where
\begin{equation} \label{4.13}
{\cal N}(\gamma)=\frac13 \gamma^3\,{\cal L}'(\gamma)+\frac18 \gamma^4\,{\cal L}''(\gamma)+\frac{1}{12} (\gamma-f'(0)).
\end{equation}
\end{thm}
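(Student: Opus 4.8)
The plan is to bootstrap from \refTheorem{thm4.1}, making explicit two things that are buried in its statement: the dependence of $\gamma_s$ on $s$ and $\gamma$, and the leading-order content of the $O(1/\sqrt s)$ remainder in \refEquation{4.5}. For the first, I would expand the logarithm in $\gamma_s=-\sqrt s\,\ln(1-\gamma/\sqrt s)$ to obtain
\[
\gamma_s=\gamma+\frac{\gamma^2}{2\sqrt s}+\frac{\gamma^3}{3s}+O\Bigl(\frac{1}{s\sqrt s}\Bigr),
\]
uniformly for $\gamma$ in a fixed compact $C\subset(\gamma_{\min},\infty)$. The proof of \refTheorem{thm4.1} already shows that, once $\sqrt s\geq 2\max_{\gamma\in C}|\gamma|$, the values $\gamma_s$ lie in a fixed compact set $D\subset(\gamma_{\min},\infty)$, and the standing hypotheses on $f$ in \refSection{ems} let one differentiate \refEquation{2.13} under the integral sign, so $\mathcal{L}$ is smooth with locally bounded derivatives on $(\gamma_{\min},\infty)$. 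A second-order Taylor expansion of $\mathcal{L}$ about $\gamma$ then gives
\[
\sqrt s\,\mathcal{L}(\gamma_s)=\sqrt s\,\mathcal{L}(\gamma)+\tfrac12\gamma^2\mathcal{L}'(\gamma)+\frac{1}{\sqrt s}\Bigl(\tfrac13\gamma^3\mathcal{L}'(\gamma)+\tfrac18\gamma^4\mathcal{L}''(\gamma)\Bigr)+O\Bigl(\tfrac1s\Bigr),
\]
with the remainder uniform on $C$ because $\gamma_s-\gamma=O(1/\sqrt s)$ uniformly and $\mathcal{L}'''$ is bounded on $D$. Substituting this into \refEquation{4.5} already yields \refEquation{4.11} with $\mathcal{M}(\gamma)=\tfrac12\gamma^2\mathcal{L}'(\gamma)-\tfrac12$.

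To pin down $\mathcal{N}(\gamma)$, and in particular the term $\tfrac1{12}(\gamma-f'(0))$, I would re-examine the proof of \refTheorem{thm4.1} and retain one further order. Starting from \refEquation{4.6} with $g(x)=e^{-\gamma_s x}f(x)$, write $\sqrt s\int_{1/(2\sqrt s)}^{\infty} g=\sqrt s\,\mathcal{L}(\gamma_s)-\sqrt s\int_0^{1/(2\sqrt s)} g$ and Taylor-expand $g(x)=1+(f'(0)-\gamma_s)x+O(x^2)$ near $0$; this gives $\sqrt s\int_0^{1/(2\sqrt s)} g=\tfrac12+\tfrac{f'(0)-\gamma_s}{8\sqrt s}+O(1/s)=\tfrac12-\tfrac{\gamma-f'(0)}{8\sqrt s}+O(1/s)$, using $\gamma_s=\gamma+O(1/\sqrt s)$. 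The \gls{EM} correction term in \refEquation{4.6} equals $\tfrac1{24\sqrt s}g'(\tfrac1{2\sqrt s})=\tfrac{f'(0)-\gamma}{24\sqrt s}+O(1/s)$, while the remainder $R$ obeys the tighter bound \refEquation{4.4} and is therefore $O(s^{-3/2})$, since $g^{(4)}\in L^1$ by the assumptions on $f$. Adding these up, and using $\tfrac18-\tfrac1{24}=\tfrac1{12}$,
\[
F_s(\rho)=\sqrt s\,\mathcal{L}(\gamma_s)-\tfrac12+\frac{\gamma-f'(0)}{12\sqrt s}+O\Bigl(\tfrac1s\Bigr),
\]
and inserting the expansion of $\sqrt s\,\mathcal{L}(\gamma_s)$ from the previous paragraph produces \refEquation{4.11}--\refEquation{4.13} exactly.

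The routine work is the two Taylor expansions and the bookkeeping of the three $1/\sqrt s$-contributions — the shift $\gamma_s-\gamma$, the truncation of the integral at $1/(2\sqrt s)$, and the \gls{EM} correction term. The only delicate point is uniformity: I would check that each Taylor remainder is $O(s^{-3/2})$ uniformly for $\gamma\in C$, using for the $\mathcal{L}(\gamma_s)$-expansion the boundedness of $\mathcal{L}'''$ on $D$, and for the expansion of $g$ near $0$ the uniform boundedness of $g,g',g''$ on $[0,1/(2\sqrt s)]$ for $\gamma\in C$ and $s\geq1$ (which follows from $f\in C^4([0,\infty))$ together with $\gamma_s\in D$). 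I do not expect any single estimate to be hard; the main obstacle is simply keeping these three sources of $O(1/\sqrt s)$ terms aligned so that their coefficients combine into precisely $\mathcal{N}(\gamma)$.
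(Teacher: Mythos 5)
Your proposal is correct and follows essentially the same route as the paper's proof: \gls{EM} summation with the tighter remainder bound \refEquation{4.4}, a second-order expansion of $\int_0^{1/(2\sqrt{s})}e^{-\gamma_s x}f(x)\,dx$ and of the correction term $g'(1/(2\sqrt{s}))$, and the Taylor expansion of ${\cal L}(\gamma_s)$ about $\gamma$ exactly as in \refEquation{4.15}--\refEquation{4.17}. Note that your evaluation $g'(1/(2\sqrt{s}))=f'(0)-\gamma_s+O(1/\sqrt{s})$ carries the sign that is actually consistent with \refEquation{4.13} (via $\tfrac{1}{24}-\tfrac{1}{8}=-\tfrac{1}{12}$ applied to $f'(0)-\gamma$), whereas the paper's displayed intermediate \refEquation{4.16} states the opposite sign, which appears to be a typo.
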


\begin{proof}
This result is obtained from (\ref{4.2}) in a similar way as \refTheorem{thm4.1}, using now the estimate (\ref{4.4}) of $R$, and approximating $\int_0^{1/(2\sqrt{s})} e^{-\gamma_sx} f(x) dx$ and $(e^{-\gamma_sx} f(x))' ( 1/(2\sqrt{s}) )$
more carefully. In particular, we have
\begin{equation} \label{4.15}
\int_0^{1/(2\sqrt{s})} e^{-\gamma_sx} f(x) dx = \frac{1}{2\sqrt{s}} + \frac{1}{8s} (f'(0)-\gamma_s) + O\Bigl( \frac{1}{s\sqrt{s}} \Bigr)
\end{equation}
and
\begin{equation} \label{4.16}
\frac{d}{dx}\,(e^{-\gamma_sx}\,f(x))\,\Bigl(\frac{1}{2\sqrt{s}}\Bigr)=\gamma_s-f'(0)+O\Bigl(\frac{1}{\sqrt{s}}\Bigr).
\end{equation}
Furthermore, because $\gamma_s = \gamma + \gamma^2 / ( 2 \sqrt{s} ) + \gamma^3 / (3s) + \ldots$ for $| \gamma | < \sqrt{s}$, we can approximate ${\cal L}(\gamma_s)$ by
\begin{align} \label{4.17}
{\cal L}(\gamma_s) - {\cal L}(\gamma) &=~(\gamma_s-\gamma){\cal L}'(\gamma)+\frac12 (\gamma_s-\gamma)^2\,{\cal L}''(\gamma)+\frac16 (\gamma_s-\gamma)^3\, {\cal L}'''(\gamma)+ \ldots \nonumber\\
&=~\frac{\gamma^2}{2\sqrt{s}}\,{\cal L}'(\gamma)+\frac1s\Big(\frac13 \gamma^3{\cal L}'(\gamma)+\frac18 \gamma^4\,{\cal L}''(\gamma)\Big)+O\Bigl(\frac{1}{s\sqrt{s}}\Bigr).
\end{align}
The $O$'s in (\ref{4.15})--(\ref{4.17}) hold uniformly in $\gamma$ in any compact set contained in $(\gamma_{\min},\infty)$.
\end{proof}

We use the following short-hand notations for approximations of $F_s(\rho)$ and $B_s(\rho)$ as they occur in the performance measures $D_s(\rho)$ and $D_s^R(\rho)$ in \refEquation{2.22} and \refEquation{2.23}. We write \refEquation{4.11} using \refEquation{4.13} as
\begin{equation} \label{4.33}
F_s(\rho)=\sqrt{s}\,{\cal L}+{\cal M}+\frac{1}{\sqrt{s}}\,{\cal N}=\sqrt{s}\,{\cal L}+{\cal M}+O\Bigl( \frac{1}{\sqrt{s}}\Bigr),
\end{equation}
and we write the Jagerman approximation of $B_s(\rho)$, see \cite{ref2}
and \cite[Theorem~14]{ref4}, as
\begin{equation} \label{4.34}
B_s(\rho)=\frac{1}{\sqrt{s}}\,g+\frac1s\,h+O\Bigl(\frac{1}{s\sqrt{s}}\Bigr).
\end{equation}
Here, $\rho=1-\gamma/\sqrt{s}$, and
\begin{equation} \label{4.35}
g(\gamma) = \frac{\varp(\gamma)}{\Phi(\gamma)}, \quad h(\gamma) = {-}\frac13 \bigl( \gamma^2+(\gamma^2+2) g(\gamma) \bigr) g(\gamma).
\end{equation}

The following results for $D_s(\rho)$ and $D_s^R(\rho)$ are proved in \refAppendixSection{proofthm4.10} using \refEquation{4.33} and \refEquation{4.34}.

\begin{thm}[Corrected \gls{QED} approximations] \label{thm4.10}
The stationary probability of delay satisfies
\begin{equation} \label{4.44}
D_s(\rho)=T_1(\gamma)+\frac{1}{\sqrt{s}}\,T_2(\gamma)+O\Bigl(\frac1s\Bigr),
\end{equation}
where
\begin{equation} \label{4.45}
T_1=\frac{g{\cal L}}{1+g{\cal L}},\quad T_2=\frac{(h+g^2)\,{\cal L}+g({\cal M}+1)}{(1+g{\cal L})^2},
\end{equation}
and where $O(1/s)$ holds uniformly in any compact set of $\gamma$'s contained in $(\gamma_{\min},\infty)$. The stationary rejection probability satisfies
\begin{equation} \label{4.46}
D_s^R(\rho)=\frac{1}{\sqrt{s}}\,T_1^R(\gamma)+\frac1s\,T_2^R(\gamma)+O\Bigl(\frac{1}{s\sqrt{s}}\Bigr),
\end{equation}
where
\begin{equation} \label{4.47}
T_1^R=\frac{1-\gamma{\cal L}}{1+g{\cal L}}\,g,~~~~~T_2^R=\frac{1-\gamma{\cal L}}{1+g{\cal L}}\, \Bigl(h-\gamma g\,\frac{\gamma{\cal L}+{\cal M}}{1-\gamma{\cal L}}-g\,\frac{h{\cal L}+g{\cal M}}{1+g{\cal L}}\Bigr),
\end{equation}
and where $O(1/s\sqrt{s})$ holds uniformly in any compact set of $\gamma$'s contained in $(\gamma_{\min},\infty)$.
\end{thm}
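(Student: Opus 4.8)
The plan is to substitute the asymptotic expansions \refEquation{4.33} for $F_s(\rho)$ and \refEquation{4.34} for $B_s(\rho)$ directly into the exact identities \refEquation{2.22} and \refEquation{2.23}, and then to expand the resulting quotients in powers of $1/\sqrt{s}$, collecting terms up to the claimed order.

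First I would invert the Jagerman approximation. Writing $B_s(\rho)=\tfrac{g}{\sqrt{s}}\bigl(1+\tfrac{h}{g\sqrt{s}}+O(1/s)\bigr)$ and using that $g(\gamma)=\varp(\gamma)/\Phi(\gamma)>0$ is bounded away from zero on every compact set of $\gamma$'s in $(\gamma_{\min},\infty)$, a geometric-series expansion gives
\[
B_s^{-1}(\rho)=\frac{\sqrt{s}}{g}-\frac{h}{g^2}+O\Bigl(\frac{1}{\sqrt{s}}\Bigr),
\]
uniformly on such sets. Adding \refEquation{4.33} then produces, for the common denominator in \refEquation{2.22}--\refEquation{2.23},
\[
B_s^{-1}(\rho)+F_s(\rho)=\sqrt{s}\,\frac{1+g{\cal L}}{g}+\Bigl({\cal M}-\frac{h}{g^2}\Bigr)+O\Bigl(\frac{1}{\sqrt{s}}\Bigr).
\]
The point that makes the reciprocal of this denominator expandable with uniformly controlled error is that $1+g{\cal L}\geq 1>0$: indeed $f\geq 0$ and $f(0)=1$ force ${\cal L}(\gamma)>0$, and ${\cal L},{\cal M},g,h$ are continuous — hence bounded — on compact $\gamma$-sets by the standing smoothness hypotheses on $f$.

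For $D_s(\rho)$ the numerator is $1+F_s(\rho)=\sqrt{s}\,{\cal L}+(1+{\cal M})+O(1/\sqrt{s})$. Dividing by the denominator, pulling out the factor $\sqrt{s}$ from numerator and denominator, multiplying through by $g$, and expanding $(1+\varepsilon/\sqrt{s})^{-1}=1-\varepsilon/\sqrt{s}+O(1/s)$ yields the leading term $g{\cal L}/(1+g{\cal L})=T_1$; collecting the coefficient of $1/\sqrt{s}$ and simplifying (the ${\cal M}$-terms partially cancel) gives exactly $T_2$ as in \refEquation{4.45}, with remainder $O(1/s)$. For $D_s^R(\rho)$ I would, in addition, expand $1-\rho^{-1}=-\tfrac{\gamma/\sqrt{s}}{1-\gamma/\sqrt{s}}=-\tfrac{\gamma}{\sqrt{s}}-\tfrac{\gamma^2}{s}+O(1/(s\sqrt{s}))$, so that the numerator of \refEquation{2.23} becomes $(1-\gamma{\cal L})-\tfrac{\gamma}{\sqrt{s}}({\cal M}+\gamma{\cal L})+O(1/s)$; dividing by the same denominator and expanding as before gives the overall prefactor $1/\sqrt{s}$, the leading term $T_1^R=g(1-\gamma{\cal L})/(1+g{\cal L})$, and a coefficient of $1/s$ which, after combining over $(1+g{\cal L})^2$ and regrouping so that the common factor $(1-\gamma{\cal L})/(1+g{\cal L})$ is displayed, is precisely $T_2^R$ of \refEquation{4.47}; the remainder is $O(1/(s\sqrt{s}))$.

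The main obstacle here is not conceptual but the algebraic bookkeeping: one must propagate two orders of each of the expansions of $F_s$, $B_s^{-1}$ and $1-\rho^{-1}$ through a ratio while keeping every $O$-term uniform on compact $\gamma$-sets, and then recognize that the somewhat unwieldy rational expressions obtained for the $1/\sqrt{s}$- and $1/s$-coefficients coincide with the compact closed forms \refEquation{4.45} and \refEquation{4.47}. The uniformity of the remainders is automatic once the denominators $1+g{\cal L}$ are bounded below and ${\cal L},{\cal M},g,h$ are continuous in $\gamma$, so the only genuine work is the elementary but error-prone expansion and identification of coefficients.
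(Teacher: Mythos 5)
Your proposal is correct and follows essentially the same route as the paper: the paper likewise inserts the expansions \refEquation{4.33} and \refEquation{4.34} into the exact formulas \refEquation{2.22}--\refEquation{2.23} and expands the quotient, retaining the $O(1)$ and $O(1/\sqrt{s})$ (resp.\ $O(1/\sqrt{s})$ and $O(1/s)$) terms. The only cosmetic difference is that the paper packages the error control in an auxiliary function $H_a(x,y)=(1+ax)/(y^{-1}+x)$ with a Lipschitz-type bound on its partial derivatives, where you instead invert $B_s(\rho)$ by a geometric series and track remainders directly via the lower bound on $1+g{\cal L}$; both devices yield the same uniform error estimates.
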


\subsection{Examples} \label{subsec4.2}

We now present several examples to illustrate Theorems~\ref{thm4.1}, \ref{thm4.2} and \ref{thm4.10}.


\subsubsection{Modified-drift control (global)} \label{exam4.6}

Consider $f(x) = p^x$ for $x \geq 0$, with $p \in (0,1)$ fixed. Then, $\gamma_{\min} = \ln{p}$, $P_s = p^{1/\sqrt{s}}$ and
\begin{equation} \label{4.24}
F_s(\rho) = \frac{p^{1/\sqrt{s}}(1-\gamma/\sqrt{s})}{1-p^{1/\sqrt{s}}(1-\gamma/\sqrt{s})}, \quad \sqrt{s}(1-p^{-1/\sqrt{s}}) < \gamma \leq \sqrt{s}.
\end{equation}
\refTheorem{thm4.2} gives the approximation
\begin{equation} \label{4.25}
F_s(\rho) \approx \frac{\sqrt{s}}{\gamma- \ln{p} }-\frac{\gamma^2}{2( \gamma- \ln{p} )^2}-\frac12,\quad \gamma > \ln{p}.
\end{equation}

\subsubsection{Erlang A control (global)} \label{exam4.7}

Let $f(x) = p^{x^2}$ for $x \geq 0$, with $p \in (0,1)$ fixed. In this case, $\gamma_{\min} = {-}\infty$ and $P_s = 0$. Also,
\begin{equation} \label{4.29}
{\cal L}(\gamma) = \frac{1}{\sqrt{\alpha}} \MM( \gamma / (2\sqrt{\alpha}) ), \quad \gamma \in \dR,
\end{equation}
where $\alpha={-}\ln{p}$ and $\MM$ is Mills' ratio, defined as
$\MM(\delta)=e^{\delta^2} \int_{\delta}^{\infty} e^{-y^2} dy$ for $\delta \in \dR$ \cite[\S 7.8]{OLBC10}. Taking the derivative, we find that
\begin{equation} \label{4.31}
{\cal L}'(\gamma) = \frac{\gamma}{2\alpha\sqrt{\alpha}} \MM( \gamma / (2\sqrt{\alpha}) ) - \frac{1}{2\alpha}, \quad \gamma \in \dR,
\end{equation}
and we then obtain from \refTheorem{thm4.2} the approximation
\begin{equation} \label{4.32}
F_s(\rho) \approx \frac{\sqrt{s}}{\sqrt{\alpha}} \MM( \gamma / (2\sqrt{\alpha}) ) + \frac14 \Bigl(\frac{\gamma}{\sqrt{\alpha}}\Bigr)^3 \MM(\gamma/ (2\sqrt{\alpha}) ) - \frac14 \Bigl( \frac{\gamma}{\sqrt{\alpha}} \Bigr)^2 - \frac12, \quad \gamma \in \dR.
\end{equation}
Note that the approximation $F_s(\rho) \approx \sqrt{s} {\cal L} (\gamma_s)-1/2$ as described in \refTheorem{thm4.1} can also be computed from \refEquation{4.29} after recalling that $\gamma_s = {-} \sqrt{s} \ln{ (1-\gamma/\sqrt{s}) }$.

\subsubsection{Scaled buffer control (global)}

Take a fixed $\eta>0$ and 
set $p_s(k) = \indicator{ k+1 < \eta \sqrt{s} }$ for $k \in \naturalNumbersZero$.
Thus for $n \in \naturalNumbersZero$, $q_s(n) = p_s(n) = f( (n+1) /\sqrt{s} )$,
with $f(x) = \indicator{ x \in [0, \eta) }$ for $x \geq 0$.
It follows that $P_s = 0$, $\gamma_{\min} = {-}\infty$ and
\begin{equation} \label{5.4}
F_s(\rho) = \Bigl( \frac{\sqrt{s}}{\gamma} - 1 \Bigr) \Bigl( 1-\Bigl( 1-\frac{\gamma}{\sqrt{s}} \Bigr)^{\lfloor\eta\sqrt{s}\rfloor} \Bigr), \quad -\infty < \gamma \leq \sqrt{s}.
\end{equation}

The function $f$ is not smooth, and strictly speaking, Theorems~\ref{thm4.1} and \ref{thm4.2} do not apply. Still, we can calculate
\begin{equation} \label{5.5}
{\cal L}(\gamma) = \frac{1}{\gamma} (1-e^{-\gamma\eta}), \quad {\cal L}'(\gamma) = - \frac{1}{\gamma^2} (1-(1+\gamma\eta) e^{-\gamma\eta}), \quad \gamma \in \dR,
\end{equation}
and use the approximation that \refTheorem{thm4.1} would give, i.e.\
\begin{equation} \label{5.6}
F_s(\rho) \approx \sqrt{s} {\cal L}(\gamma_s) - \frac{1}{2} =\frac{1-(1-\gamma/\sqrt{s})^{\eta\sqrt{s}}}{ -\ln{ (1-\gamma/\sqrt{s})} }- \frac{1}{2}.
\end{equation}
Alternatively, \refTheorem{thm4.2} gives the approximation
\begin{equation} \label{5.7}
F_s(\rho) \approx \sqrt{s}{\cal L}(\gamma) + \frac12\gamma^2{\cal L}'(\gamma) - \frac12 = \Bigl(\frac{\sqrt{s}}{\gamma}-1\Bigr) (1-e^{-\gamma\eta}) - \frac12 e^{-\gamma\eta}(1-\gamma\eta), \quad \gamma \in \dR.
\end{equation}

While \refEquation{5.4} has a jump as a function of $s$ at all $s$ where $\eta\sqrt{s}$ is integer, its approximations in \refEquation{5.6} and \refEquation{5.7} are smooth functions of $s$, if we consider $s \geq 1$ as a continuous variable. The averages of the approximations over $s$-intervals $[(k/\eta)^2,((k+1)/\eta)^2]$ with integer $k$ agree well with the average of \refEquation{5.4} over these intervals. Thus, while Theorems~\ref{thm4.1} and \ref{thm4.2} do not apply, they yield approximations that perform well in an appropriate average sense. This is also illustrated in \refFigure{fig:Ds_as_function_of_eta}.

\begin{figure}[!hbtp]
\begin{center}
\begin{tikzpicture}
\node[anchor=south west,inner sep=0] at (0,0) {\includegraphics[width=3.3in, keepaspectratio]{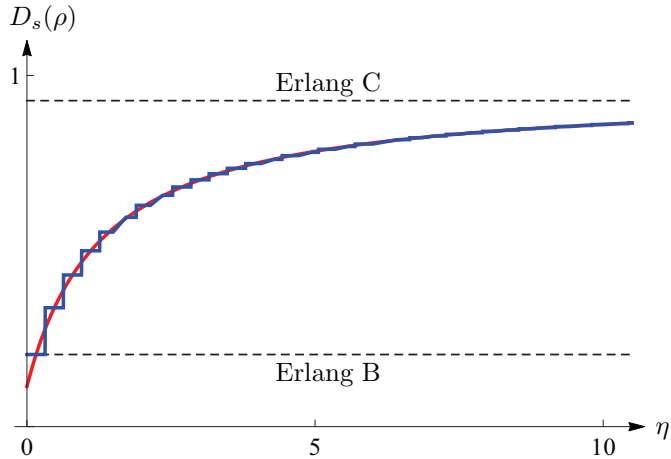}};
\node[anchor=south west,inner sep=0] at (8.5,0.3) {$\eta$};
\node[anchor=south west,inner sep=0] at (0,5.7) {$D_s(\rho)$};
\node[anchor=south west,inner sep=0] at (3.5,1.0) {Erlang B};
\node[anchor=south west,inner sep=0] at (3.5,4.85) {Erlang C};
\end{tikzpicture}
\caption{The stationary probability of delay for $s = 10$ and scaled buffer control. The jagged, blue curve gives the exact value \refEquation{5.4} and the red, smooth curve pertains to approximation \refEquation{5.7}.}
\label{fig:Ds_as_function_of_eta}
\end{center}
\end{figure}


\section{\glsentrytext{QED} approximations for local control} \label{sec5}

A clear technical advantage of global control is that it leads to infinite-series expressions for the performance measures $D_s(\rho)$ and $D_s^R(\rho)$ that are directly amenable to asymptotic analysis based on \gls{EM} summation. This approach was followed in \refSection{sec4} and led to \refTheorem{thm4.10}. As argued in \refSection{sec3}, in some practical cases it is more natural to work with the local control defined
in \refEquation{1.2}. In this section we show that \refTheorem{thm4.10} also gives sharp approximations for local control. Indeed, in \refSection{conn} it was argued that for local control,
$q_s(n)\approx f((n+1)/\sqrt{s})$
with $f$ defined as in \refEquation{6.5}. Consider for instance the  modified-drift control
in \refEquation{mdc}, in which case 
\begin{align} \label{m1}
F_s(\rho)&=\sum_{n=0}^\infty p^{\frac{n+1}{\sqrt{s}}}\Big(1-\frac{\gamma}{\sqrt{s}}\Big)^{n+1}\nonumber\\
&=\frac{\sqrt{s}}{\gamma-\ln p}-\frac{\gamma}{\gamma-\ln p}-\frac{1}{2}\Big(\frac{\ln p}{\gamma-\ln p}\Big)^2+O\Bigl(\frac{1}{\sqrt{s}}\Bigr).
\end{align}
Here, the second equality follows from the \gls{QED} approximation in \refEquation{4.11}. The local counterpart follows from $a(x)=-f'(x)/f(x)=-\ln p$ and \refEquation{1.2}, for which
\begin{equation} \label{m2}
F_s(\rho) = \sum_{n=0}^\infty \Big(\frac{1}{1-\frac{1}{\sqrt{s}}\ln p}\Big)^{n+1}\Big(1-\frac{\gamma}{\sqrt{s}}\Big)^{n+1}=\frac{\sqrt{s}}{\gamma-\ln p}-\frac{\gamma}{\gamma-\ln p}.
\end{equation}
The second equality in \refEquation{m2} follows from summation of a geometric series. Hence, for the example of modified-drift control, it can be seen from the close resemblance of the last members of \refEquation{m1} and \refEquation{m2} that approximating local control by global control yields sharp estimates in the \gls{QED} regime.

In \refSection{logl} we make formal the accuracy of the approximation $q_s(n)\approx f((n+1)/\sqrt{s})$ for a wide range of local controls. As it turns out, the approximation becomes asymptotically correct in the \gls{QED} regime. Therefore, for local controls for which the Ansatz $q_s(n)=f((n+1)/\sqrt{s})$ does not hold precisely, it will give sharp approximations for the performance measures in the \gls{QED} regime.
For the example of Erlang A control, with $a(x)=\vart x$, this is demonstrated in \refSection{ErlangAAA}.

%

\subsection{Approximating local by global control} \label{logl}

We first present a general result for all functions $a$ considered in this paper. The proof is given in \refAppendixSection{proofpropallA}.
\begin{prop}[Relation between global and local control] \label{propallA}
Assume that $a(x)$ is non-negative and non-decreasing in $x\geq 0$. There is an increasing function $\psi(s)$ of $s\geq 1$ with $\psi(s)\to\infty$, $s\to\infty$, such that
\begin{equation}
q_s(n) = f\Bigl(\frac{n+1}{\sqrt{s}}\Bigr)\Big(1+O(s^{- \frac{1}{4} })\Big), \quad 0 \leq n+1 \leq \sqrt{s} \psi(s),
\end{equation}
where $f$ is given as in \refEquation{6.5}.
\end{prop}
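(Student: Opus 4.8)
Starting from the exact identity \eqref{6.2} and recalling \eqref{6.5}, one has for every $n\in\naturalNumbersZero$ and $s\ge 1$
\[
\frac{q_s(n)}{f((n+1)/\sqrt s)}=\exp(X_s(n)),\qquad X_s(n):=\int_0^{(n+1)/\sqrt s}a(y)\,dy-\sum_{k=0}^n\ln\Bigl(1+\tfrac{1}{\sqrt s}\,a\bigl(\tfrac{k+1}{\sqrt s}\bigr)\Bigr),
\]
where the ratio is well defined since $f>0$ and $q_s(n)=\prod_{k=0}^n p_s(k)>0$. Because $e^{x}=1+O(x)$ as $x\to0$, the proposition reduces to producing an increasing $\psi$ with $\psi(s)\to\infty$ for which $X_s(n)=O(s^{-1/4})$ uniformly over $0\le n+1\le\sqrt s\,\psi(s)$, after which one simply exponentiates.

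The first step is to bound $|X_s(n)|$ by inserting the Riemann sum $S_s(n):=\tfrac{1}{\sqrt s}\sum_{k=0}^n a((k+1)/\sqrt s)$. Since $a$ is non-decreasing, $\tfrac{1}{\sqrt s}a(k/\sqrt s)\le\int_{k/\sqrt s}^{(k+1)/\sqrt s}a(y)\,dy\le\tfrac{1}{\sqrt s}a((k+1)/\sqrt s)$, so summing and telescoping gives $\int_0^{(n+1)/\sqrt s}a-S_s(n)\in[-\tfrac{1}{\sqrt s}a((n+1)/\sqrt s),0]$. Since $0\le u-\ln(1+u)\le u^2/2$ for $u\ge0$, the remaining piece $S_s(n)-\sum_{k=0}^n\ln(1+\tfrac{1}{\sqrt s}a((k+1)/\sqrt s))$ lies in $[0,\tfrac{1}{2s}\sum_{k=0}^n a^2((k+1)/\sqrt s)]$, and monotonicity of $a^2$ bounds that sum by $(n+1)\,a^2((n+1)/\sqrt s)$. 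Hence, writing $t:=(n+1)/\sqrt s$,
\[
|X_s(n)|\le\frac{1}{\sqrt s}\,a(t)+\frac{n+1}{2s}\,a^2(t)=\frac{1}{\sqrt s}\Bigl(a(t)+\tfrac12\,t\,a^2(t)\Bigr)\le\frac{1}{\sqrt s}\,(1+t)(1+a(t))^2 .
\]

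It remains to choose $\psi$ so that $(1+t)(1+a(t))^2=O(s^{1/4})$ throughout $t\le\psi(s)$, while $\psi(s)\to\infty$. Set $G(t):=t(1+a(t))^2$: it is non-decreasing, finite for every $t\ge0$ (as $a(t)<\infty$), satisfies $G(t)\le t(1+a(1))^2$ for $t\le1$ and $G(t)\ge t\to\infty$. With $\beta(s):=\sup\{t\ge0:G(t)\le s^{1/4}\}$ one has $0<\beta(s)\le s^{1/4}<\infty$ for all $s\ge1$, $\beta$ is non-decreasing, and $\beta(s)\ge M$ once $s\ge G(M)^4$, so $\beta(s)\to\infty$. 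Take, for instance, $\psi(s):=\tfrac14\,\tfrac{s}{1+s}\,\beta(s)$, which is strictly increasing, tends to $\infty$, and stays strictly below $\beta(s)$. Because $G$ is non-decreasing, $\{t:G(t)\le s^{1/4}\}$ is an interval, so $t\le\psi(s)<\beta(s)$ forces $G(t)\le s^{1/4}$; then $(1+t)(1+a(t))^2\le2G(t)\le2s^{1/4}$ when $t\ge1$, and $(1+t)(1+a(t))^2\le2(1+a(1))^2$ when $t<1$. In either case $|X_s(n)|\le\tfrac{1}{\sqrt s}\bigl(2s^{1/4}+2(1+a(1))^2\bigr)=O(s^{-1/4})$ uniformly for $0\le n+1\le\sqrt s\,\psi(s)$, which is exactly what was needed.

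The one genuinely delicate point is the construction of $\psi$: it must be calibrated against the ``inverse'' of $t\mapsto t(1+a(t))^2$ so as to keep $G(\psi(s))$ of order $s^{1/4}$ yet still diverge — slower growth of $a$ permits faster growth of $\psi$, and faster growth of $a$ forces slower growth of $\psi$ — while the extra factors $\tfrac14\,\tfrac{s}{1+s}$ serve only to make $\psi$ strictly increasing and to keep it safely below $\beta(s)$, which sidesteps any jumps of $a$ at the endpoint. Everything else is routine bookkeeping: the two error contributions (the Riemann discretisation and the $\ln(1+u)\approx u$ linearisation) are both visibly absorbed by the single rate, and the passage from $X_s(n)=O(s^{-1/4})$ to $q_s(n)=f((n+1)/\sqrt s)(1+O(s^{-1/4}))$ is immediate from $e^{x}=1+O(x)$.
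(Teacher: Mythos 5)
Your proof is correct and follows essentially the same route as the paper's: both decompose the log-ratio into a Riemann-discretization error (controlled via monotonicity of $a$) plus a $\ln(1+u)\approx u$ linearization error (controlled via $0\le u-\ln(1+u)\le u^2/2$), and both calibrate $\psi$ through a generalized inverse so that each error is $O(s^{-1/4})$ on the stated range. The only cosmetic differences are the order of the two comparison steps and your use of a single inverse of $t\mapsto t(1+a(t))^2$ where the paper takes the minimum of the two inverses $a^{\leftarrow}(s^{1/4}/2)$ and $A^{\leftarrow}(s^{1/4})$ with $A(x)=\int_0^x a^2$.
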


We next illustrate \refProposition{propallA} for a special case of increasing $a$.
Let $\vart>0$ and $\alpha\geq0$, and let $a(x)=\vart\,x^{\alpha}$ for $x \geq 0$.
Inspecting the proof of \refProposition{propallA}, case $\delta=1/4$,
it is seen that $\psi$ is found by requiring
\begin{equation}
a\Bigl(\frac{n+1}{\sqrt{s}}\Bigr)\leq \frac12 s^{ \frac{1}{4} }, \quad \int_0^{\frac{n+1}{\sqrt{s}}}a^2(x)dx\leq s^{ \frac{1}{4} }.
\end{equation}
\refProposition{propallA} yields
\begin{equation}
q_s(n) = \exp\Bigl(\frac{-\vart}{\alpha+1}\Big(\frac{n+1}{\sqrt{s}}\Big)^{\alpha+1}\Bigr)\Big(1+O(s^{- \frac{1}{4} })\Big),\quad 0\leq n+1\leq \sqrt{s} \psi(s),
\end{equation}
with
\begin{equation} \label{pssss}
\psi(s) = \min{ \Big\{ \Big(\frac{1}{2\vart}\Big)^{\frac{1}{\alpha}}s^{\frac{1}{4\alpha}},\Big(\frac{2\alpha+1}{\vart^2}\Big)^{\frac{1}{2\alpha+1}}s^{\frac{1}{4(2\alpha+1)}} \Big\} }.
\end{equation}

\subsection{Erlang A control (local)} \label{ErlangAAA}

We now consider in detail Erlang A control, in order to demonstrate our obtained QED approximations. Erlang A control gives rise to a birth--death process that is identical to the classical Erlang A model \cite{garnett,erlanga}. It allows us to express $F_s(\rho)$ in terms of the confluent hypergeometric function and to subsequently show that an asymptotic expansion of the confluent hypergeometric function leads to a QED approximation similar to \refEquation{4.32}.

We thus consider the example
\begin{equation} \label{6.12}
p_s(k)=\frac{1}{1+(k+1)\,\frac{\vart}{s}}, \quad k \in \naturalNumbersZero,
\end{equation}
which corresponds to $\alpha=1$ when $a(x) = \vart x^\alpha$ for $x \geq 0$, so $f(x) = \exp{ ( {-} \vart x^2 / 2 ) }$ for $x \geq 0$.

\begin{prop} \label{prop6.3}
Assume that $p_s(k)$ is given by \refEquation{6.12}. Then
\begin{equation} \label{6.14}
F_s(\rho) = \frac{1}{\rho} \bigl( M( 1, s/\vart, s\rho/\vart ) - 1 - \rho \bigr), \quad \rho \geq 0,
\end{equation}
in which
\begin{equation} \label{6.15}
M(a,b,z) = \sum_{n=0}^{\infty} \frac{(a)_n}{(b)_n} \frac{z^n}{n!},\quad z \in \dC
\end{equation}
is the confluent hypergeometric function {\rm \cite[Chapter 13]{OLBC10}}, with $(x)_l$ the Pochhammer symbol, i.e.~$(x)_l = 0$ for $l = 1$ and $(x)_l = x (x+1) \cdot \ldots \cdot (x+l-1)$ for $l \geq 1$.
\end{prop}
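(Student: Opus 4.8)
The plan is to start from the explicit form of the stationary distribution in \refEquation{2.1}--\refEquation{2.3}, specialized to the control \refEquation{6.12}. Recall from \refEquation{2.3} that $F_s(\rho)=\sum_{n=0}^{\infty} q_s(n)\,\rho^{n+1}$ with $q_s(n)=p_s(0)\cdots p_s(n)$. For the Erlang~A control \refEquation{6.12} the product telescopes nicely in terms of Pochhammer symbols: since $p_s(k)=\bigl(1+(k+1)\vart/s\bigr)^{-1}$, we have
\begin{equation*}
q_s(n)=\prod_{k=0}^{n}\frac{1}{1+(k+1)\vart/s}=\prod_{j=1}^{n+1}\frac{s/\vart}{s/\vart+j}=\frac{(s/\vart)^{n+1}\,(1)_{n+1}}{(s/\vart)_{n+2}\cdot\;\text{(correction)}},
\end{equation*}
so the first step is to massage this product into the ratio $(1)_m/(s/\vart)_m$ up to a shift, which is the shape that appears in \refEquation{6.15}. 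Concretely I would write $1+(k+1)\vart/s=(s/\vart+k+1)/(s/\vart)$, so that $q_s(n)=(s/\vart)^{n+1}\prod_{k=0}^n (s/\vart+k+1)^{-1}$, and recognize $\prod_{k=0}^n(s/\vart+k+1)=(s/\vart+1)_{n+1}=(s/\vart)_{n+2}/(s/\vart)$.

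The second step is to plug this into the series for $F_s(\rho)$ and match it term-by-term against the defining series \refEquation{6.15} for $M(1,s/\vart,s\rho/\vart)$. Writing $b=s/\vart$ and $z=s\rho/\vart$, so that $\rho=z/b$, one gets
\begin{equation*}
F_s(\rho)=\sum_{n=0}^{\infty} q_s(n)\,\rho^{n+1}=\sum_{n=0}^{\infty}\frac{b^{\,n+1}}{(b+1)_{n+1}}\Bigl(\frac{z}{b}\Bigr)^{n+1}=\sum_{m=1}^{\infty}\frac{z^m}{(b+1)_m}\cdot\frac{(b)_1}{(b)_1},
\end{equation*}
and since $(b+1)_m=(b)_{m+1}/b=(b)_m\cdot(b+m)/b$, careful bookkeeping of the index shift should reveal that $F_s(\rho)=\rho^{-1}\sum_{m\ge 2}\frac{(1)_{m}}{(b)_{m}}\frac{z^m}{m!}\cdot(\text{const})$. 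The cleanest route is probably to observe that $M(1,b,z)=\sum_{m\ge 0}\frac{(1)_m}{(b)_m}\frac{z^m}{m!}=\sum_{m\ge 0}\frac{m!}{(b)_m}\frac{z^m}{m!}=\sum_{m\ge 0}\frac{z^m}{(b)_m}$, strip off the $m=0$ term ($=1$) and the $m=1$ term ($=z/b=\rho$), and check that the remaining tail $\sum_{m\ge 2} z^m/(b)_m$, after pulling out one factor of $z/b=\rho$ and reindexing $m=n+2$, equals $\rho\sum_{n\ge 0} z^{n+1}/(b)_{n+2}\cdot b^{-1}\cdot b = \rho\cdot(\text{the }F_s\text{ series})$. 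Dividing through by $\rho$ yields exactly \refEquation{6.14}.

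The only genuine subtlety I anticipate is the off-by-one reindexing: one must be careful that $q_s(n)$ is the coefficient of $\rho^{n+1}$ (not $\rho^n$) in \refEquation{2.3}, that the Pochhammer shift from $(b+1)_{n+1}$ to $(b)_{n+2}$ introduces the extra factor of $b=s/\vart$ that makes everything cancel, and that the convergence region is handled (the series for $M$ converges for all $z\in\dC$, and $F_s$ converges precisely when $\rho<1/P_s$, consistent with $P_s=0$ here so the identity is valid for all $\rho\ge 0$). I would close by noting that the stated formula $(x)_l=0$ for $l=1$ in the proposition appears to be a typo for $(x)_0=1$ (the empty product), and that with the standard convention $(x)_0=1$, $(x)_l=x(x+1)\cdots(x+l-1)$ the bookkeeping above is exactly correct; no deeper analytic input is needed, so this is essentially an algebraic identity whose proof is a short computation once the Pochhammer manipulation is set up.
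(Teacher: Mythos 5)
Your proof is correct and follows essentially the same route as the paper: compute $q_s(n)=(s/\vart)^{n+2}/(s/\vart)_{n+2}$ via the Pochhammer telescoping, then match the resulting series against $M(1,s/\vart,s\rho/\vart)$ after stripping off the $m=0$ and $m=1$ terms and reindexing $m=n+2$. Your side remark that the stated convention ``$(x)_l=0$ for $l=1$'' should read ``$(x)_l=1$ for $l=0$'' is also correct.
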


\begin{proof}
For $n \in \naturalNumbersZero$,
\begin{equation} \label{6.16}
q_s(n)=\prod_{k=0}^n\:\frac{1}{1+(k+1)\,\frac{\vart}{s}}=\frac{(s/\vart)^{n+2}}{(s/\vart)_{n+2}}.
\end{equation}
Therefore
\begin{equation} \label{6.18}
F_s(\rho)=\sum_{n=0}^{\infty}\,q_s(n)\,\rho^{n+1}=\sum_{n=0}^{\infty}\:\frac{(s/\vart)^{n+2}}{(s/\vart)_{n+2}}\,\rho^{n+1},
\end{equation}
and (\ref{6.14}) follows after some rearrangements. \end{proof}

In \cite[13.8(ii)]{OLBC10} the asymptotics of $M(a,b,z)$ is considered when $b$ and $z$ are large while $a$ is fixed and $b/z$ is in a compact set contained in $(0,\infty)$. With
\begin{equation} \label{6.19}
a=1\,,~~b=s/\vart\,,~~z=s\rho/\vart
\end{equation}
and $s\pr\infty$, while $\rho=1-\gamma/\sqrt{s}$ is close to 1, this is precisely the situation we are interested in. Temme \cite{ref5} gives a complete asymptotic series, and this leads to the following result.

\begin{prop} \label{prop6.4}
As $s\pr\infty$,
\begin{equation} \label{6.23}
F_s(\rho)\sim\Bigl(\frac{2}{\vart}\Bigr)^{ \frac{1}{2} } \MM(\gamma/\sqrt{2\vart}) \sqrt{s}+\frac{\gamma^3\sqrt{2}}{3\vart^{ \frac{3}{2} }}\MM(\gamma/\sqrt{2\vart}) - \frac{\gamma^2}{3\vart} - \frac{2}{3}.
\end{equation}
\end{prop}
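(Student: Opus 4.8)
The plan is to start from the closed form $F_s(\rho)=\rho^{-1}\bigl(M(1,s/\vart,s\rho/\vart)-1-\rho\bigr)$ supplied by \refProposition{prop6.3} and feed the confluent hypergeometric function the parameters $a=1$, $b=s/\vart$, $z=s\rho/\vart$ of \refEquation{6.19}. Since $\rho=1-\gamma/\sqrt s$, we have $z/b=\rho=1+\bigO{1/\sqrt s}$, so $z/b$ stays in a compact subset of $(0,\infty)$ for all large $s$ and in fact tends to $1$; this is exactly the transition regime of \cite[13.8(ii)]{OLBC10} and of Temme's \cite{ref5} complete asymptotic series, in which the leading behaviour of $M$ is governed by an error function, equivalently by Mills' ratio $\MM$. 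The whole proof is then a bookkeeping exercise: substitute the parameters into Temme's expansion, expand everything in powers of $1/\sqrt s$, and use \refProposition{prop6.3} to pass from $M$ to $F_s$.

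First I would introduce the standard transition variable $\eta$, defined by $\tfrac12\eta^2=\lambda-1-\ln\lambda$ with $\lambda=z/b=\rho$ and $\operatorname{sgn}\eta=\operatorname{sgn}(\lambda-1)$. Because $\lambda-1=-\gamma/\sqrt s$ is small, $\eta=-\gamma/\sqrt s+\bigO{1/s}$, and hence $\eta\sqrt{b/2}=-\gamma/\sqrt{2\vart}+\bigO{1/\sqrt s}$. Temme's formula expresses $M(1,b,z)$ as a Stirling-type prefactor containing $e^{\frac12 b\eta^2}$, times $\tfrac12\operatorname{erfc}\!\bigl(\eta\sqrt{b/2}\bigr)$, plus a descending series in $1/b$. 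Using $\tfrac12\operatorname{erfc}(\delta)=\pi^{-1/2}e^{-\delta^2}\MM(\delta)$, the exponential factors cancel the $e^{\frac12 b\eta^2}$ from the prefactor, and the leading term collapses to $\sqrt{2b}\,\MM(\gamma/\sqrt{2\vart})=(2/\vart)^{1/2}\MM(\gamma/\sqrt{2\vart})\sqrt s$, which is the first term of \refEquation{6.23}; to leading order $F_s(\rho)=\rho^{-1}M\approx M$, so this already fixes the $\sqrt s$-term.

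To get the $\bigO1$ part of \refEquation{6.23} I would push the same expansion one order further: retain the $\bigO{1/b}$ corrections to the Stirling-type prefactor and to $\eta\sqrt{b/2}$, keep the first coefficient of Temme's $1/b$-series (its value at the transition point $\eta=0$ and the first derivative correction), and expand $e^{\frac12 b\eta^2}$ beyond $e^{\gamma^2/(2\vart)}$. Then multiply by $1/\rho=1+\gamma/\sqrt s+\bigO{1/s}$ and subtract $1+\rho=2-\gamma/\sqrt s$ as \refProposition{prop6.3} dictates — crucially, $1/\rho$ acting on the $\sqrt s$-order term itself produces an $\bigO1$ contribution. Assembling all the constant-order pieces yields $\tfrac{\gamma^3\sqrt2}{3\vart^{3/2}}\MM(\gamma/\sqrt{2\vart})-\tfrac{\gamma^2}{3\vart}-\tfrac23$, with the remaining terms $\bigO{1/\sqrt s}$ uniformly on compacta of $\gamma$, which is the claim.

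The main obstacle is purely the bookkeeping, not any conceptual point: one must translate Temme's normalisation (usually stated with $\Gamma$-factors, the variable $\lambda=z/b$, and parabolic-cylinder/error functions — or, equivalently, route through the identity $M(1,b,z)=(b-1)z^{1-b}e^{z}\gamma(b-1,z)$ and Temme's uniform expansion of the incomplete gamma function) into the $\MM$-form of \refEquation{6.23} without sign or constant slips, and then carry every $1/\sqrt s$-order correction — coming from $\eta$, from $\sqrt{b/2}$, from the Stirling prefactor, from the $1/b$-series, from $e^{\frac12 b\eta^2}$, and from the factor $1/\rho$ — consistently through to the constant term. As a sanity check I would verify that the leading term agrees with \refTheorem{thm4.1}/\refTheorem{thm4.2} specialised to $f(x)=e^{-\vart x^2/2}$ through \refEquation{4.29} with $\alpha=\vart/2$, while noting that the $\bigO1$ terms legitimately differ from those in \refEquation{4.32}, since for local control the Ansatz $q_s(n)=f((n+1)/\sqrt s)$ holds only up to a relative error $\bigO{s^{-1/4}}$ (cf.\ \refProposition{propallA}).
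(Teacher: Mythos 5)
Your proposal follows essentially the same route as the paper's proof: starting from \refProposition{prop6.3}, invoking Temme's uniform asymptotic expansion of $M(1,b,z)$ in the transition regime $z/b\to1$ via the variable $\tfrac12\zeta^2=\rho-1-\ln\rho$ (the paper writes the expansion with parabolic cylinder functions $U(\pm\tfrac12,\cdot)$, which reduce to the same Gaussian integrals and hence to Mills' ratio $\MM$ as in your $\operatorname{erfc}$ formulation), and then carrying the $1/\sqrt s$ corrections from $\zeta$, the prefactor, the second series term, and the factor $1/\rho$ through to the constant order. The plan is correct and matches the paper's argument in all essentials.
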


\noindent
\begin{proof}
The first two terms of Temme's asymptotic series \cite{ref5} are as follows. Let $\zeta=\sqrt{2(\rho-1-{\rm ln}\,\rho)}$,
where ${\rm sgn}(\zeta)={\rm sgn}(\rho-1)$. Then
\begin{align} \label{6.21}
\hspace*{-6mm}M\Bigl(1,\frac{s}{\vart},\frac{s\rho}{\vart}\Bigr) \sim \Bigl(\frac{s}{\vart}\Bigr)^{ \frac{1}{2} } \exp\Bigl(\frac{\zeta^2s}{4\vart}\Bigr)
\Bigl\{ &\rho U\Bigl(\frac12,{-}\zeta\Bigl( \frac{s}{\vart}\Bigr)^{ \frac{1}{2} }\Bigr) \nonumber \\
&+ \Bigl(\rho-\frac{\zeta}{\rho-1}\Bigr)\,\frac{1}{\zeta\Bigl(\frac{s}{\vart}\Bigr)^{ \frac{1}{2} }}\,U\Bigl({-}\frac12,{-}\zeta\Bigl(\frac{s}{\vart}\Bigr)^{ \frac{1}{2} }\Bigr)\Bigr\}
\end{align}
with $U$ the parabolic cylinder function of \cite[Ch.~12]{OLBC10}. In this particular case \cite[\S 12.5.1, \S 12.7.1]{OLBC10},
\begin{equation} \label{6.22}
U \Bigl( \frac{1}{2},z \Bigr) = \e^{-\frac14 z^2} \int_0^{\infty} \e^{-\frac12 t^2- zt} dt, \quad U \Bigl( {-} \frac{1}{2}, z \Bigr) = \e^{-\frac14 z^2}.
\end{equation}
Since $\rho=1-\gamma/\sqrt{s}$, 
\begin{equation} \label{6.24}
\zeta = {-} \frac{\gamma}{\sqrt{s}}-\frac{\gamma^2}{3s} + O\Bigl(\frac{\gamma^3}{s\sqrt{s}}\Bigr), \quad \rho-\frac{\zeta}{\rho-1} = {-} \frac43 \frac{\gamma}{\sqrt{s}}+O \Bigl(\frac{\gamma^2}{s} \Bigr).
\end{equation}
Substituting in \refEquation{6.21}, together with \refEquation{6.22}, we get
\begin{align}
F_s(\rho) \sim \, {-} 1 - \frac{1}{\rho}+\Bigl(\frac{s}{\vart}\Bigr)^{ \frac{1}{2} } \exp\Bigl(\frac{\zeta^2s}{4\vart}\Bigr) \Bigl\{ &\exp\Bigl({-} \frac{\zeta^2s}{4\vart}\Bigr) \int_0^{\infty}
\e^{-\frac12 t^2+\zeta (\frac{s}{\vart})^{ \frac{1}{2} }t} dt \nonumber \\
&+ \frac{\rho-\zeta/(\rho-1)}{\zeta\rho(s/\vart)^{ \frac{1}{2} }} \exp\Bigl({-} \frac{\zeta^2s}{4\vart}\Bigr)\Bigr\},
\end{align}
so that
\begin{equation} \label{6.25}
F_s(\rho) \sim -\frac23 + \Bigl(\frac{s}{\vart}\Bigr)^{ \frac{1}{2} } \int_0^{\infty} \e^{-\frac12 t^2+\zeta(\frac{s}{\vart})^{ \frac{1}{2} }t} dt+O\Bigl(\frac{\gamma}{\sqrt{s}}\Bigr).
\end{equation}
Next
\begin{equation} \label{6.26}
\zeta\Bigl(\frac{s}{\vart}\Bigr)^{ \frac{1}{2} } = {-} \frac{\gamma}{\sqrt{\vart}}-\frac{\gamma^2}{3\sqrt{\vart s}}+O\Bigl(\frac{\gamma^3}{s}\Bigr),
\end{equation}
and using this in \refEquation{6.25}, we get
\begin{align} \label{6.27}
F_s(\rho) =& - \frac23 + \Bigl(\frac{s}{\vart}\Bigr)^{ \frac{1}{2} } \int_0^{\infty} \e^{ -\frac12 t^2-\frac{\gamma t}{\sqrt{\vart}}} dt \nonumber \\
&- \frac13 \Bigl(\frac{s}{\vart}\Bigr)^{ \frac{1}{2} } \frac{\gamma^2}{\sqrt{\vart s}} \int_0^{\infty} \e^{-\frac12 t^2-\frac{\gamma t}{\sqrt{\vart}}} t dt+O\Bigl(\frac{\gamma}{\sqrt{s}} \Bigr).
\end{align}
Finally, using that
\begin{gather}
\label{6.28}
\int_0^{\infty} \e^{-\frac12 t^2-\frac{\gamma t}{\sqrt{\vart}}} dt = \sqrt{2} \MM(\gamma/\sqrt{2\vart}),
\\
\label{6.29}
\int_0^{\infty} \e^{-\frac12 t^2-\frac{\gamma t}{\sqrt{\vart}}} t dt = {-}\MM'(\gamma/\sqrt{2\vart}) = {-} \Bigl( \frac{2\gamma}{\sqrt{2\vart}} \MM(\gamma/\sqrt{2\vart})-1 \Bigr),
\end{gather}
we obtain the result.
\end{proof}

It is instructive to rewrite the asymptotics \refEquation{4.32} of $F_s(\rho)$ in Example~\ref{exam4.7} for the case that $q_n = f((n+1)/\sqrt{s})$ and $f(x) = p^x$, in terms of $\vart = 2\alpha = - 2\ln p$. In doing so, \refEquation{4.32} becomes
\begin{equation} \label{5.22}
F_s(\rho) \sim \Bigl(\frac{2}{\vart}\Bigr)^{1/2} \MM(\gamma/\sqrt{2\vart}) \sqrt{s}+\frac{\gamma^3}{\sqrt{2}\vart^{3/2}}\MM(\gamma/\sqrt{2\vart}) - \frac{\gamma^2}{2\vart} - \frac12.
\end{equation}
Observe the close resemblance between \refEquation{5.22} and \refEquation{6.23}.

\subsubsection{Numerical comparison}

For Erlang A control, for which $f(x) = p^{x^2}$ and $- \ln p = \alpha = \vart / 2$, we have now determined an exact expression and an asymptotic expression for $F_s(\rho)$, given in \refProposition{prop6.3} and \refProposition{prop6.4}, respectively. Through \refEquation{2.22} we then obtain exact and asymptotic expressions for $D_s$. Furthermore, we can obtain approximate values for $D_s$ using the first-order and second-order approximation in \refTheorem{thm4.10}. \refTable{tab:Numerical_comparison_of_different_expressions_of_Ds} shows a numerical comparison when using these different expressions for $D_s$.

\begin{table}[!hbtp]
\small
\begin{center}
\begin{tabular}{cp{0.01pt}cccp{0.01pt}cccp{0.01pt}ccc}
\toprule
& & \multicolumn{3}{c}{$\vart = 1$} & & \multicolumn{3}{c}{$\vart = 10$} & \, & \multicolumn{3}{c}{$\vart = 100$} \\
\cmidrule{3-5} \cmidrule{7-9}  \cmidrule{11-13}
$s$ & \, & $D_s^\textrm{exact}$ & $D_s^\textrm{asymp}$ & $D_s^{\textrm{approx}}$ & & $D_s^\textrm{exact}$ & $D_s^\textrm{asymp}$ & $D_s^{\textrm{approx}}$& & $D_s^\textrm{exact}$ & $D_s^\textrm{asymp}$ & $D_s^{\textrm{approx}}$ \\
\midrule
   1 & \, & 0.59343 & 0.57277 & 0.62582 & & 0.49415 & 0.39305 & 0.48528 & & 0.47591 & 0.29172 & 0.41076 \\
   2 & \, & 0.55437 & 0.54342 & 0.57730 & & 0.41389 & 0.34704 & 0.40797 & & 0.38093 & 0.23525 & 0.31498 \\
   4 & \, & 0.52652 & 0.52092 & 0.54300 & & 0.35137 & 0.31225 & 0.35330 & & 0.29862 & 0.19283 & 0.24726 \\
   8 & \, & 0.50691 & 0.50410 & 0.51874 & & 0.30732 & 0.28658 & 0.31465 & & 0.23226 & 0.16172 & 0.19938 \\
  16 & \, & 0.49313 & 0.49172 & 0.50158 & & 0.27830 & 0.26792 & 0.28731 & & 0.18229 & 0.13925 & 0.16552 \\
  32 & \, & 0.48343 & 0.48273 & 0.48946 & & 0.25956 & 0.25448 & 0.26798 & & 0.14717 & 0.12315 & 0.14157 \\
  64 & \, & 0.47660 & 0.47625 & 0.48088 & & 0.24735 & 0.24487 & 0.25432 & & 0.12407 & 0.11169 & 0.12464 \\
 128 & \, & 0.47178 & 0.47160 & 0.47481 & & 0.23924 & 0.23802 & 0.24465 & & 0.10961 & 0.10354 & 0.11267 \\
 256 & \, & 0.46837 & 0.46828 & 0.47053 & & 0.23375 & 0.23316 & 0.23782 & & 0.10068 & 0.09776 & 0.10421 \\
 512 & \, & 0.46597 & 0.46592 & 0.46749 & & 0.23000 & 0.22970 & 0.23299 & & 0.09506 & 0.09367 & 0.09822 \\
1024 & \, & 0.46427 & 0.46425 & 0.46535 & & 0.22740 & 0.22725 & 0.22957 & & 0.09146 & 0.09774 & 0.09399 \\
\bottomrule
\end{tabular}
\caption{Numerical comparison of different expressions of $D_s$ for $f(x) = \exp{ ( - \vart x^2 / 2 ) }$ and $\gamma = 0.1$. Here, $D_s^{\textrm{exact}}$ is calculated using \refProposition{prop6.3}, $D_s^{\textrm{asymp}}$ using \refProposition{prop6.4} and $D_s^{\textrm{approx}} = T_1 + T_2 / \sqrt{s}$ using \refTheorem{thm4.10}. Furthermore, for all $s$, $T_1 \approx 0.46017, 0.22132$ and $0.08377$ for $\vart = 1, 10$ and $100$, respectively.}
\label{tab:Numerical_comparison_of_different_expressions_of_Ds}
\end{center}
\end{table}

From \refTable{tab:Numerical_comparison_of_different_expressions_of_Ds} we see that the precision of all approximations increase with $s$. We also see that the second-order approximation $T_1 + T_2 / \sqrt{s}$ is more accurate than the first-order approximation $T_1$, particularly for moderate values of $s$.

\section{Conclusions and outlook} \label{sec6}

We have introduced \gls{QED} scaled control, designed to reduce the incoming traffic in periods of congestion, in such a way that the controlled many-server system remains within the domain of attraction of the favorable \gls{QED} regime. The scaled control is chosen such that it affects the typical $O(\sqrt{s})$ queue lengths that arise in the \gls{QED} regime. The class of many-server systems with \gls{QED} control introduced in this paper contains the Erlang B, C and A models as special cases. For all cases we have derived so-called corrected \gls{QED} approximations, which not only identify the \gls{QED} limits as leading terms, but also provide corrections through higher order terms for finite system sizes $s$. As a key example we took the stationary probability of delay, for which the corrected \gls{QED} approximation reads $D_s(\rho)\approx T_1(\gamma)+T_2(\gamma)/\sqrt{s}$, as stated in \refTheorem{thm4.10}. The technique developed in this paper to obtain the corrected diffusion approximations can be easily applied to other characteristics of the stationary distribution, such as the mean and the cumulative distribution function.

Our corrected \gls{QED} approximations pave the way for obtaining optimality results for dimensioning systems \cite{borst}. Consider for instance the basic problem of determining the largest load $\rho$ such that $D_s(\rho)\leq\eps$ with $\eps\in (0,1)$. The delay probability is a function of the two model parameters $s$ and  $\lambda$, and of the control policy. Denote this unique solution by $\rho=\rho_{\rm opt}$ and define $\gamma_{\rm opt}=\sqrt{s}(1-\rho_{\rm opt})$. Asymptotic dimensioning would approximate $D_s(\rho)$ by the \gls{QED} limit $T_1(\gamma)$ that only depends on $\gamma$ (and no longer on both $s$ and  $\lambda$). Hence, the inverse problem can then be approximatively solved by searching for the $\gamma=\gamma_*$ such that $T_1(\gamma)=\eps$, and then setting the load according to $\rho_*=1-\gamma_*/\sqrt{s}$. This procedure is referred to as square-root staffing, and the error $|\gamma_{{\rm opt}}-\gamma_*|$ is called the  {\it optimality gap}. In future work, we will leverage the corrected \gls{QED} approximations derived in the present paper to characterize the optimality gaps for a large class of dimensioning problems.

\section*{Acknowledgments}

This research was financially supported by The Netherlands Organization for Scientific Research (NWO) in the framework of the TOP-GO program and by an ERC Starting Grant.

\bibliographystyle{alpha}
\bibliography{Bibliography}

\appendix

\renewcommand\thethm{\Alph{section}.\arabic{thm}}


\section{Proof of \refProposition{prop2.1}} \label{proofprop2.1}

\subsection{Proof of \refProposition{prop2.1}(i)}

We assume that $F_s(\rho)$, $\rho=1-\gamma/\sqrt{s}$, is of the form (\ref{2.3}) with
\begin{equation} \label{3.1}
p_s(0)\cdot \ldots \cdot p_s(n)=q_s(n)=f\Bigl(\frac{n+1}{\sqrt{s}}\Bigr),\quad n=0,1,\ldots ,
\end{equation}
where $s\geq1$ and $f(x)$ is a non-negative and non-increasing function in $x\geq0$ with $f(0)=1$. Furthermore, we recall the definition of $\gamma_s={-}\sqrt{s}\,{\rm ln}(1-\gamma/\sqrt{s})$ in (\ref{2.16}).
The stability result of \refProposition{prop2.1}(i) is a consequence of the following inequality.

\begin{prop} \label{prop3.2}
For $\sqrt{s}(1-\exp({-}\gamma_{\min}/\sqrt{s}))<\gamma\leq0$,
\begin{equation} \label{3.2}
e^{\gamma_s/\sqrt{s}}\,\int_{1/\sqrt{s}}^{\infty}\,e^{-\gamma_sx}\,f(x)\,dx\leq\frac{1}{\sqrt{s}}\, F_s(\rho)\leq e^{-\gamma_s/\sqrt{s}}\,\int_0^{\infty}\,e^{-\gamma_sx}\,f(x)\,dx.
\end{equation}
\end{prop}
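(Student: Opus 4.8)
The plan is to read \refEquation{2.17} as expressing $\tfrac{1}{\sqrt{s}}F_s(\rho)$ as a Riemann-type sum for the integral of $g(x):=e^{-\gamma_s x}f(x)$ over $[0,\infty)$, and then to compare the sum with the integral one subinterval at a time. First I would record that the hypothesis $\sqrt{s}(1-\exp(-\gamma_{\min}/\sqrt{s}))<\gamma\leq 0$ is, after solving $\gamma_s={-}\sqrt{s}\ln(1-\gamma/\sqrt{s})$ for $\gamma$, exactly equivalent to $\gamma_{\min}<\gamma_s\leq 0$. The inequality $\gamma_s>\gamma_{\min}$ guarantees $\int_0^\infty e^{-\gamma_s x}f(x)\,dx={\cal L}(\gamma_s)<\infty$, so both integrals in \refEquation{3.2} are finite; the inequality $\gamma_s\leq 0$ means $x\mapsto e^{-\gamma_s x}$ is non-decreasing. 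By \refEquation{2.17}, $\tfrac{1}{\sqrt{s}}F_s(\rho)=\tfrac{1}{\sqrt{s}}\sum_{n=0}^\infty g\bigl(\tfrac{n+1}{\sqrt{s}}\bigr)=\tfrac{1}{\sqrt{s}}\sum_{m=1}^\infty g\bigl(\tfrac{m}{\sqrt{s}}\bigr)$, a sum of rectangle areas of width $1/\sqrt{s}$.

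The function $g$ is a product of a non-decreasing factor $e^{-\gamma_s x}$ and the non-increasing factor $f$, hence need not be monotone, but on each interval $[\tfrac{m}{\sqrt{s}},\tfrac{m+1}{\sqrt{s}}]$ one can sandwich it by using the monotonicity of the two factors separately: for $x$ in this interval,
\begin{equation}
e^{-\gamma_s m/\sqrt{s}}\,f\Bigl(\tfrac{m+1}{\sqrt{s}}\Bigr)\ \leq\ g(x)\ \leq\ e^{-\gamma_s(m+1)/\sqrt{s}}\,f\Bigl(\tfrac{m}{\sqrt{s}}\Bigr).
\end{equation}
Rewriting the two outer expressions, $e^{-\gamma_s m/\sqrt{s}}f(\tfrac{m+1}{\sqrt{s}})=e^{\gamma_s/\sqrt{s}}\,g(\tfrac{m+1}{\sqrt{s}})$ and $e^{-\gamma_s(m+1)/\sqrt{s}}f(\tfrac{m}{\sqrt{s}})=e^{-\gamma_s/\sqrt{s}}\,g(\tfrac{m}{\sqrt{s}})$.

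For the upper bound in \refEquation{3.2} I would integrate the left inequality over $[\tfrac{m}{\sqrt{s}},\tfrac{m+1}{\sqrt{s}}]$ and sum over $m\geq 0$, getting $\int_0^\infty g(x)\,dx\geq e^{\gamma_s/\sqrt{s}}\cdot\tfrac{1}{\sqrt{s}}\sum_{m\geq 0}g(\tfrac{m+1}{\sqrt{s}})=e^{\gamma_s/\sqrt{s}}\cdot\tfrac{1}{\sqrt{s}}F_s(\rho)$, which rearranges to $\tfrac{1}{\sqrt{s}}F_s(\rho)\leq e^{-\gamma_s/\sqrt{s}}\int_0^\infty e^{-\gamma_s x}f(x)\,dx$. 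For the lower bound I would integrate the right inequality over $[\tfrac{m}{\sqrt{s}},\tfrac{m+1}{\sqrt{s}}]$ and sum over $m\geq 1$, getting $\int_{1/\sqrt{s}}^\infty g(x)\,dx\leq e^{-\gamma_s/\sqrt{s}}\cdot\tfrac{1}{\sqrt{s}}\sum_{m\geq 1}g(\tfrac{m}{\sqrt{s}})=e^{-\gamma_s/\sqrt{s}}\cdot\tfrac{1}{\sqrt{s}}F_s(\rho)$, which rearranges to $\tfrac{1}{\sqrt{s}}F_s(\rho)\geq e^{\gamma_s/\sqrt{s}}\int_{1/\sqrt{s}}^\infty e^{-\gamma_s x}f(x)\,dx$. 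Together these give \refEquation{3.2}.

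There is no real obstacle here; the proof is essentially a careful bookkeeping exercise. The two places that need attention are: (a) checking that the stated range of $\gamma$ translates precisely to $\gamma_{\min}<\gamma_s\leq 0$, since that is what simultaneously supplies the monotonicity of $e^{-\gamma_s x}$ (used to sandwich $g$) and the convergence of the integrals; and (b) tracking the index shift between $\sum_{n\geq 0}g(\tfrac{n+1}{\sqrt{s}})$ and $\sum_{m\geq 1}g(\tfrac{m}{\sqrt{s}})$ together with the factors $e^{\pm\gamma_s/\sqrt{s}}$, so that the lower and upper integrals come out with limits $1/\sqrt{s}$ and $0$ respectively, exactly as in \refEquation{3.2}.
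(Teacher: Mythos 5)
Your proposal is correct and follows essentially the same route as the paper: both compare the series \refEquation{2.17} for $F_s(\rho)$ with the integral of $g(x)=e^{-\gamma_s x}f(x)$ interval by interval, using that $f$ is non-increasing and (since $\gamma_s\leq 0$) $e^{-\gamma_s x}$ is non-decreasing, which yields exactly the paper's bounds \refEquation{3.6} and \refEquation{3.7} after summation. The only cosmetic difference is that you package the two one-sided estimates as a single two-sided sandwich on each subinterval before summing over $m\geq 0$ and $m\geq 1$ respectively.
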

\begin{proof}We start by noting that
\begin{equation} \label{3.3}
\gamma>\sqrt{s}(1-e^{-\gamma_{\min}/\sqrt{s}})=:\gamma_{\min,s}\Leftrightarrow\gamma_s>\gamma_{\min}.
\end{equation}
We consider formula (\ref{2.17}) for $F_s(\rho)$. We have for $\gamma\leq0$ and $n=0,1,\ldots $ from monotonicity of $f$ that
\begin{equation} \label{3.4}
f(x)\geq f\Bigl(\frac{n+1}{\sqrt{s}}\Bigr)\,,~~e^{-\gamma_sx}\geq e^{\gamma_s/\sqrt{s}}\,e^{-\frac{n+1}{\sqrt{s}}\gamma_s}\,,~~\frac{n}{\sqrt{s}}\leq x\leq\frac{n+1}{\sqrt{s}}
\end{equation}
and
\begin{equation} \label{3.5}
f(x)\leq f\Bigl(\frac{n+1}{\sqrt{s}}\Bigr)\,,~~e^{-\gamma_sx}\leq e^{-\gamma_s/\sqrt{s}}\,e^{-\frac{n+1}{\sqrt{s}}\gamma_s}\,,~~\frac{n+1}{\sqrt{s}}\leq x\leq\frac{n+2}{\sqrt{s}}.
\end{equation}
Hence, from (\ref{3.4}) for $n=0,1,\ldots $
\begin{equation} \label{3.6}
\int_{n/\sqrt{s}}^{(n+1)/\sqrt{s}}\,e^{-\gamma_sx}\,f(x)\,dx\geq\frac{1}{\sqrt{s}}\,e^{\gamma_s/\sqrt{s}}\,
e^{-\gamma_s\frac{n+1}{\sqrt{s}}}\,f\Bigl(\frac{n+1}{\sqrt{s}}\Bigr),
\end{equation}
and from (\ref{3.5}) for $n=0,1,\ldots $
\begin{equation} \label{3.7}
\int_{(n+1)/\sqrt{s}}^{(n+2)/\sqrt{s}}\,e^{-\gamma_sx}\,f(x)\,dx\leq\frac{1}{\sqrt{s}}\,e^{-\gamma_s/\sqrt{s}}\,
e^{-\gamma_s\frac{n+1}{\sqrt{s}}}\,f\Bigl(\frac{n+1}{\sqrt{s}}\Bigr).
\end{equation}
From (\ref{3.6}) and (\ref{3.7}) the two inequalities in (\ref{3.2}) readily follow.
\end{proof}

\refProposition{prop3.2} shows that $F_s(\rho)<\infty$ if and only if $\mathcal{L}(\gamma_s)<\infty$, where it is used that $f$ is non-negative and bounded. By the definition of $\gamma_{\min}$ and the assumption in (\ref{2.15}) it follows that $F_s(\rho)<\infty$ if and only if $\gamma_s>\gamma_{\min}$. Then from (\ref{3.3}) the equivalence in \refProposition{prop2.1}(i) follows.

\subsection{Proof of \refProposition{prop2.1}(ii)}
Let $s=1,2,\ldots$ and consider the case $-\gamma_{\min}=\lim_{x\to\infty}a(x)=: L < \infty$. Then, by monotonicity of $a$,
\begin{equation} \label{640}
q_s(n)=\prod_{k=0}^n\frac{1}{1+\frac{1}{\sqrt{s}}a(\frac{k+1/2}{\sqrt{s}})}\geq \Big(\frac{1}{1+\frac{1}{\sqrt{s}}L}\Big)^{n+1},
\end{equation}
and so $F_s(\rho)=\infty$ when $\rho\geq 1+\frac{1}{\sqrt{s}}L$. Next, take $0\leq \rho<1+\frac{1}{\sqrt{s}}L$. From $a=a(\frac{k+1/2}{\sqrt{s}})\leq L$, $\rho< 1+\frac{1}{\sqrt{s}}L$,
\eqan{ \label{641}
\frac{\rho}{ 1+\frac{1}{\sqrt{s}}a}&=1-\frac{ 1+\frac{1}{\sqrt{s}}L-\rho}{ 1+\frac{1}{\sqrt{s}}a}+\frac{1}{\sqrt{s}}\frac{L-a}{ 1+\frac{1}{\sqrt{s}}a}\nonumber\\
&<1-\Big(1-\frac{\rho}{ 1+\frac{1}{\sqrt{s}}L}\Big)+\frac{1}{\sqrt{s}}
(L-a).
}
Hence, we can find a $K=1,2\ldots$ such that
\begin{equation} \label{642}
\frac{\rho}{1+\frac{1}{\sqrt{s}}a(\frac{k+1/2}{\sqrt{s}})}\leq 1-\frac{1}{2}\Big(1-\frac{\rho}{ 1+\frac{1}{\sqrt{s}}L}\Big), \quad k>K.
\end{equation}
Therefore,
\begin{align} \label{643}
F_s(\rho)&\leq \sum_{n=0}^K \rho^{n+1}+\rho^{K+1}\sum_{n=K+1}^\infty \rho^{n-K} \prod_{k=K+1}^n \frac{1}{1+\frac{1}{\sqrt{s}}a(\frac{k+1/2}{\sqrt{s}})}\nonumber\\
&<\sum_{n=0}^K \rho^{n+1}+\rho^{K+1}\sum_{n=K+1}^\infty \Big(1-\frac{1}{2}\Big(1-\frac{\rho}{ 1+\frac{1}{\sqrt{s}}L}\Big)\Big)^{n-K}<\infty.
\end{align}
This proves the result for the case $L<\infty$. The proof for the case $L=\infty$ is similar.

\section{Proof of \refProposition{prop:Weak_convergence_to_a_diffusion_process}}\label{proof:dp}
We will use Stone's theorem \cite{Iglehart73}, for which we need to verify that (a) the state space of the normalized process converges to a limit that is dense in $\realNumbers$ and (b) the infinitesimal mean and variance of $\process{\vC{X}{s}(t)}{t \geq 0}$ converge uniformly to $m(x)$ and $\sigma^2(x)$, respectively.

Condition (a) is readily verified. The state space of $\process{\vC{X}{s}(t)}{t \geq 0}$ is given by $\vC{\stateSpace}{s} = \{ (k-s)/\sqrt{s} | k \in \naturalNumbersZero\}$, and we see that for every $x \in \realNumbers$ and every $\epsilon > 0$, there exists an $s > 0$ such that $\min_{ y \in \vC{\stateSpace}{s} } | x - y | < \epsilon$.

To verify condition (b), we recall that for any birth--death process $\process{Y(t)}{t \geq 0}$ with birth--death parameters $\seq{\lambda}{k}$ and $\seq{\mu}{k}$ and associated states
\begin{equation}
\seq{\sB}{0} < \seq{\sB}{1} < \seq{\sB}{2} < \ldots,
\end{equation}
the infinitesimal mean and variance are defined as
\begin{equation}
m(\sB) = \seq{\lambda}{ e(\sB) } ( \seq{\sB}{ e(\sB) + 1 } - \seq{\sB}{e(\sB)} ) - \seq{\mu}{e(\sB)} ( \seq{\sB}{e(\sB)} - \seq{\sB}{e(\sB)-1} )
\end{equation}
and
\begin{equation}
\sigma^2(\sB) = \seq{\lambda}{ e(\sB) } ( \seq{\sB}{ e(\sB) + 1 } - \seq{\sB}{ e(\sB) } )^2 + \seq{\mu}{ e(\sB) } ( \seq{\sB}{ e(\sB) } - \seq{\sB}{ e(\sB) - 1 } )^2,
\end{equation}
respectively. Here,
\begin{equation}
e(\sB) = \arg \sup_{ k \in \naturalNumbersZero } \{ \seq{\sB}{k} | \seq{\sB}{k} \leq \sB \}
\end{equation}
is the label of the state closest to, but never above, $y \in [ \seq{y}{0}, \seq{y}{\infty} )$.

For each birth--death process $\process{\vC{X}{s}(t)}{t \geq 0}$, we have that
\begin{gather}
\seq{\vC{\lambda}{s}}{k} = \vC{\lambda}{s} \indicator{ k < s } + \vC{\lambda}{s} \vC{p}{s}(k-s) \indicator{k \geq s}, \quad k \in \naturalNumbersZero, \\
\seq{\vC{\mu}{s}}{k} = \min \{ k, s \}, \quad k \in \naturalNumbersZero, \\
\vC{\seq{\sA}{ \vC{e}{s}(x) + 1 }}{s} - \vC{\seq{\sA}{ \vC{e}{s}(x) }}{s} = 1 / \sqrt{s}, \quad x \in [ - \sqrt{s}, \infty ),
\end{gather}
and
\begin{equation}
\vC{e}{s}(x) = \lfloor s + x \sqrt{s} \rfloor, \quad x \in [ - \sqrt{s}, \infty ).
\end{equation}
Because $\gamma$ is fixed, \refEquation{2.8} prescribes that we are scaling the arrival rate as $\vC{\lambda}{s} = s - \gamma \sqrt{s}$. This yields
\begin{equation}
\vC{m}{s}(x)
=
\begin{cases}
- \gamma + \frac{ s - \lfloor s + \sqrt{s} x \rfloor }{ \sqrt{s} }, & x < 0, \\
- \gamma \vC{p}{s}( \lfloor s + \sqrt{s} x \rfloor - s ) + ( \vC{p}{s}( \lfloor s + \sqrt{s} x \rfloor - s ) - 1 ) \sqrt{s}, & x \geq 0, \\
\end{cases} \label{eqn:Infinitesimal_drift_of_process_Xs}
\end{equation}
and
\begin{equation}
\vC{\sigma}{s}^2(x)
=
\begin{cases}
\frac{ s + \lfloor s + \sqrt{s} x \rfloor ) }{ s } - \frac{ \gamma }{ \sqrt{s} }, & x < 0, \\
\vC{p}{s}( \lfloor s + \sqrt{s} x \rfloor - s ) + 1 - \frac{ \gamma \vC{p}{s}( \lfloor s + \sqrt{s} x \rfloor - s ) }{ \sqrt{s} }, & x \geq 0. \\
\end{cases} \label{eqn:Infinitesimal_variance_of_process_Xs}
\end{equation}

By first Taylor expanding \refEquation{1.2},
\begin{equation}
p_s(k)
= \frac{1}{1+\frac{1}{\sqrt{s}}a(\frac{k+1}{\sqrt{s}})}
= 1 - \frac{1}{\sqrt{s}} a \Bigl( \frac{k+1}{\sqrt{s}} \Bigr) + \bigO{ s^{-1} }, \quad k \in \naturalNumbersZero, \label{eqn:Taylor_expansion_of_local_probability_control}
\end{equation}
and then substituting \refEquation{eqn:Taylor_expansion_of_local_probability_control} into \refEquation{eqn:Infinitesimal_drift_of_process_Xs} and \refEquation{eqn:Infinitesimal_variance_of_process_Xs}, we conclude that
\begin{equation}
\vC{m}{s}(x)
=
\begin{cases}
- \gamma - \frac{ \lfloor s + \sqrt{s} x \rfloor - s }{ \sqrt{s} }, & x < 0, \\
- \gamma - a \bigl( \frac{ \lfloor s + \sqrt{s} x \rfloor - s + 1}{\sqrt{s}} \bigr) + \bigO{ s^{ -\frac{1}{2} } }, & x \geq 0, \\
\end{cases}
\end{equation}
and $\vC{\sigma}{s}^2(x) = 2 + \bigO{ s^{-\frac{1}{2}} }$ for all $x$.
Because $( \lfloor s + \sqrt{s} x \rfloor - s + 1 ) / \sqrt{s} \rightarrow x$ as $s \rightarrow \infty$ and $a$ is continuous and bounded on every compact subinterval $I$ of $\realNumbers$, we have that for every compact subinterval $I$ of $\realNumbers$, $\lim_{s \rightarrow \infty} \vC{m}{s}(x) = m(x)$
and $\lim_{s \rightarrow \infty} \vC{\sigma}{s}^2(x) = \sigma^2(x) = 2$ uniformly for $x \in I$. This concludes the proof.

\section{\glsentrytext{EM} summation} \label{appB}

Let $m=1,2,\ldots \,$, $N=1,2,\ldots \,$, and let $h\in C^{2m}([0,N+1])$. Then
\begin{align} \label{B.1}
&\sum_{n=0}^N h( n+ \nicefrac{1}{2} ) - \int_0^{N+1} h(x) dx \nonumber \\
&= \sum_{k=1}^m \frac{B_{2k}( \nicefrac{1}{2} )}{(2k)!} (h^{(2k-1)}(N{+}1){-}h^{(2k-1)}(0))
- \int_0^{N+1} \frac{\tilde{B}_{2m}(x - \nicefrac{1}{2} )}{(2m)!} h^{(2m)}(x) dx \nonumber \\
&= \sum_{k=1}^{m-1} \frac{B_{2k}( \nicefrac{1}{2} )}{(2k)!} (h^{(2k-1)}(N{+}1){-}h^{(2k-1)}(0))
- \int_0^{N+1} \frac{\tilde{B}_{2m}(x - \nicefrac{1}{2} )-B_{2m}( \nicefrac{1}{2} )}{(2m)!} h^{(2m)}(x) dx.
\end{align}
Here
\begin{equation} \label{B.2}
B_{2k}( \nicefrac{1}{2} ) = {-}(1-2^{-2k+1}) B_{2k}, \quad k = 1, 2, \ldots ,
\end{equation}
with $B_{2k}$ the Bernoulli numbers of positive, even order, see \cite[\S 24.2~(i)]{OLBC10} and $\tilde{B}_{2m}(x)=B_{2m}(x-\lfloor x\rfloor)$ with $B_{2m}(x)$ the Bernoulli polynomial of degree $2m$. Moreover, the two integrals in (\ref{B.1}) involving $\tilde{B}_{2m}$ can be estimated as
\begin{equation} \label{B.3}
\Bigl| \int_0^{N+1} \frac{\tilde{B}_{2m}(x- \nicefrac{1}{2} )}{(2m)!} h^{(2m)}(x) dx \Bigr|
\leq \frac{|B_{2m}|}{(2m)!} \int_0^{N+1} |h^{(2m)}(x)| dx
\end{equation}
and
\begin{equation} \label{B.4}
\Bigl| \int_0^{N+1} \frac{\tilde{B}_{2m}(x- \nicefrac{1}{2} )-B_{2m}( \nicefrac{1}{2} )}{(2m)!} h^{(2m)}(x) dx \Bigr|
\leq 2(1-2^{-2m}) \frac{|B_{2m}|}{(2m)!} \int_0^{N+1} |h^{(2m)}(x)| dx,
\end{equation}
respectively. These formulas follow from \cite[Theorem~1.3]{ref7} or \cite[Theorem~2.1]{ref8} (the latter reference containing a proof) for EM summation of $h$ sampled at points $n+\nu$, $n=0,1,\ldots, N$. For the special case that $\nu = \nicefrac{1}{2}$, a simplification occurs due to $B_j( \nicefrac{1}{2} )=0$ for $j=1,3,\ldots \,$. The bounds in (\ref{B.3}) and (\ref{B.4}) follow from \cite[\S 24.12~(i) and \S 24.4.34]{OLBC10}, with a special consideration for $B_2(x)=x^2-x+1/6$. We have $B_2=1/6$, $B_4=-1/30$, $B_6=1/42, \ldots \,$. When $m=1$, the series over $k$ in the second form in (\ref{B.1}) is absent.

The formula (\ref{B.1}) is sometimes called the second \gls{EM} summation formula, see \cite[(5.8.18--19) on p.~154]{ref9}. It distinguishes itself from the first \gls{EM} summation formula, as appears in \cite[ \S 2.10~(i)]{OLBC10}, in that
{\rm{(i)}} half-integer, rather than integer samples of $h$ are used at the left-hand side,
{\rm{(ii)}} absence of a term $\frac12\,(h(N+\nicefrac{1}{2})+h(\nicefrac{1}{2}))$ at the right-hand side, and
{\rm{(iii)}} smaller coefficients $B_{2k}(\nicefrac{1}{2})$ in the series over $k$ at the right-hand side.
Hence, also see the comment in \cite[(5.8.18--19)]{ref9}, the second EM formula is somewhat simpler in form and slightly more accurate when the remainder terms $R$ are dropped than the first EM formula.

In the main text, this formula is used for $h(x)=g( (x+ \nicefrac{1}{2} ) / \sqrt{s} )$, with $g\in C^{2m}([0,\infty))$ and $m=1$ and 2 while assuming that $\int_0^{\infty} |g^{(2m)}(x)| dx < \infty$ and that $g^{(2k-1)}(x)\pr0$, $x\pr\infty$, for $k=1$ and $k=1,2$, respectively. Subsequently, the formula is used for functions $g(x)$ of the form $\exp({-}\gamma_s x) f(x)$, $x\geq0$.

\section{Proof of \refTheorem{thm4.10}} \label{proofthm4.10}

We can write
\begin{equation} \label{4.37}
D_s(\rho) = \frac{1+F_s(\rho)}{B_s^{-1}(\rho) + F_s(\rho)} = H_1( F_s(\rho), B_s(\rho) )
\end{equation}
and
\begin{equation} \label{4.38}
D_s^R(\rho) = \frac{1+(1-\rho^{-1}) F_s(\rho)} {B_s^{-1}(\rho)+F_s(\rho)} = H_{1-\rho^{-1}}( F_s(\rho), B_s(\rho) ),
\end{equation}
where
\begin{equation} \label{4.39}
H_a(x,y) = \frac{1+ax}{y^{-1}+x}.
\end{equation}
Error propagation in $D_s$ and $D_s^R$ when both $F_s(\rho)$ and $B_s(\rho)$ are approximated can be assessed using the following result.

\begin{prop} \label{prop4.8}
For $a\in\dR$ and $x\geq0$, $x+\Delta x\geq0$ and $0\leq y\leq1$, $0\leq y+\Delta y\leq1$, it holds that
\begin{equation} \label{4.40}
|H_a(x+\Delta x,y+\Delta y)-H_a(x,y)| \leq |y(a-y)| |\Delta x|+|1+ax| |\Delta y|.
\end{equation}
\end{prop}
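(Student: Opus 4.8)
The plan is to collapse $H_a$ to a single rational function whose increment has a transparent numerator and a denominator bounded below by $1$. Clearing the inner reciprocal gives $H_a(x,y)=\dfrac{1+ax}{y^{-1}+x}=\dfrac{y(1+ax)}{1+xy}$, and since $x$, $x+\Delta x$, $y$, $y+\Delta y$ are all nonnegative, both $1+xy$ and $1+(x+\Delta x)(y+\Delta y)$ are at least $1$. Writing $x'=x+\Delta x$ and $y'=y+\Delta y$, I will put the difference over the common denominator $(1+xy)(1+x'y')$,
\[
H_a(x',y')-H_a(x,y)=\frac{y'(1+ax')(1+xy)-y(1+ax)(1+x'y')}{(1+xy)(1+x'y')},
\]
and expand the numerator. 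After cancellation it equals $(1+ax)\Delta y+y'(a-y)\Delta x$, which, since $y'=y+\Delta y$, I rewrite as
\[
(1+ax)\Delta y+y(a-y)\Delta x+(a-y)\Delta x\,\Delta y.
\]

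Since the denominator is at least $1$, the triangle inequality then gives
\[
|H_a(x',y')-H_a(x,y)|\le |1+ax|\,|\Delta y|+|y(a-y)|\,|\Delta x|+|a-y|\,|\Delta x|\,|\Delta y|,
\]
and the first two terms are already the asserted bound. So the remaining task is to absorb the cross term $|a-y|\,|\Delta x|\,|\Delta y|$. The way to do this is to not discard all of the denominator: one keeps $|H_a(x',y')-H_a(x,y)|=\dfrac{\big|(1+ax)\Delta y+y'(a-y)\Delta x\big|}{(1+xy)(1+x'y')}$, bounds the $\Delta y$-contribution by $|1+ax|\,|\Delta y|$ (trivially, since the denominator exceeds $1$), and then uses the leftover slack $|1+ax|\,|\Delta y|\big(1-\tfrac{1}{(1+xy)(1+x'y')}\big)$ to pay for replacing $\dfrac{y'}{(1+xy)(1+x'y')}$ by $y$ in the $\Delta x$-contribution. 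Adding the two estimates is designed to reproduce exactly $|y(a-y)|\,|\Delta x|+|1+ax|\,|\Delta y|$.

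I expect this final balancing — choosing how to partition the numerator and how to allocate the denominator slack between the cross term and the $\Delta x$-term — to be the only delicate point; the algebraic identity for the numerator and the bound $1+xy\ge 1$ are routine. As a sanity check I will first treat the degenerate cases $\Delta y=0$ and $\Delta x=0$: there the cross term vanishes and the inequality reduces to a one-variable estimate, where the relevant partial derivative, $\partial_x H_a=\dfrac{y(a-y)}{(1+xy)^2}$ respectively $\partial_y H_a=\dfrac{1+ax}{(1+xy)^2}$, has modulus at most $|y(a-y)|$ respectively $|1+ax|$ along the whole segment (again because $(1+xy)^2\ge1$), so the mean value theorem closes those cases immediately.
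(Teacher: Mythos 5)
Your exact-increment identity is correct, and the step you yourself flag as delicate is precisely where the argument breaks down: the cross term cannot be absorbed, because the inequality \refEquation{4.40} is false as stated. Take $a=2$, $x=y=0$, $\Delta x=\Delta y=1$ (all hypotheses are satisfied). Then $H_2(1,1)-H_2(0,0)=\tfrac32$, while the right-hand side of \refEquation{4.40} equals $|0\cdot2|\cdot1+|1|\cdot1=1$. Your own formula shows why: the increment is
\begin{equation*}
H_a(x',y')-H_a(x,y)=\frac{(1+ax)\,\Delta y+(y+\Delta y)(a-y)\,\Delta x}{(1+xy)(1+x'y')},
\end{equation*}
and the coefficient of $\Delta x$ carries the factor $y+\Delta y$, not $y$. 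When $y=0$ and $\Delta y>0$ the term $|y(a-y)||\Delta x|$ contributes nothing, and the denominator slack you propose to spend is quantitatively insufficient: in the example the slack from the $\Delta y$-part is $|1+ax||\Delta y|\bigl(1-\tfrac12\bigr)=\tfrac12$, while the unpaid $\Delta x$-contribution is $1$. No partition of the numerator or allocation of the denominator can rescue a false inequality, so the final balancing step you planned will not close.

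For comparison, the paper's own proof computes $\partial H_a/\partial x=y(a-y)/(1+xy)^2$ and $\partial H_a/\partial y=(1+ax)/(1+xy)^2$, takes $\max_{x\geq0}$ and $\max_{0\leq y\leq1}$ respectively, and declares the result to follow; but a two-leg mean-value argument only yields a bound in which one of the two coefficients is evaluated at the displaced point, e.g.\ $|1+ax||\Delta y|+|(y+\Delta y)(a-y-\Delta y)||\Delta x|$, so the paper's proof suffers from the same defect and the proposition should be read as valid only up to terms of higher order in $(\Delta x,\Delta y)$. Your route via the exact increment is in fact the cleaner one, because it immediately delivers a correct statement:
\begin{equation*}
|H_a(x+\Delta x,y+\Delta y)-H_a(x,y)|\leq|y(a-y)||\Delta x|+|1+ax||\Delta y|+|a-y||\Delta x||\Delta y|,
\end{equation*}
obtained from your numerator decomposition, the triangle inequality, and $(1+xy)(1+x'y')\geq1$. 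This corrected version suffices verbatim for the only use of the proposition, in \refEquation{4.48}--\refEquation{4.50}: there $\Delta x=O(1/\sqrt{s})$ and $\Delta y=O(1/(s\sqrt{s}))$, so the extra cross term is $O(s^{-2})$, negligible against the $O(1/s)$ error being tracked. I would stop after your second display, state the inequality with the cross term included, and not attempt the absorption.
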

\begin{proof}
We have
\begin{equation} \label{4.41}
\frac{\partial H_a}{\partial x} = \frac{y(a-y)}{(1+xy)^2}, \quad \frac{\partial H_a}{\partial y} = \frac{1+ax}{(1+xy)^2}.
\end{equation}
Therefore
\begin{equation} \label{4.42}
\max_{x\geq0} \Bigl| \frac{\partial H_a}{\partial x} \Bigr|=|y(a-y)|, \quad 0 \leq y \leq 1,
\end{equation}
and
\begin{equation} \label{4.43}
\max_{0\leq y\leq1} \Bigl| \frac{\partial H_a}{\partial y} \Bigr| = |1+ax|, \quad x \geq 0,
\end{equation}
and the result follows.
\end{proof}

We insert the approximations (\ref{4.33}) and (\ref{4.34}) into (\ref{4.37}), and we get
\begin{equation} \label{4.48}
D_s(\rho) = \Bigl(\frac{1}{\sqrt{s}} g+\frac1s h\Bigr) \frac{1+\sqrt{s} {\cal L}+{\cal M}} {1+ (\frac{1}{\sqrt{s}} g + \frac1s h ) (\sqrt{s}{\cal L}+{\cal M})} + O\Bigl( \frac1s \Bigr).
\end{equation}
The approximation error $O(1/s)$ here is obtained from using (\ref{4.40}) with $a=1$ and with
\begin{equation} \label{4.49}
x=\sqrt{s}\,{\cal L}+{\cal M}=O(\sqrt{s}),\quad \Delta x=O\Bigl(\frac{1}{\sqrt{s}}\Bigr),
\end{equation}
\begin{equation} \label{4.50}
y=\frac{1}{\sqrt{s}}\,g+\frac1s\,h=O\Bigl(\frac{1}{\sqrt{s}}\Bigr),\quad \Delta y=O\Bigl(\frac{1}{s\sqrt{s}}\Bigr),
\end{equation}
where
the $O$'s in (\ref{4.49}--\ref{4.50}) hold uniformly in any compact set of $\gamma$'s contained in $(\gamma_{\min},\infty)$. Consequently, the $O(1/s)$ in (\ref{4.48}) holds uniformly in any compact set of $\gamma$'s contained in $(\gamma_{\min},\infty)$. Expanding the expression at the right-hand side of (\ref{4.48}), retaining only the terms $O(1)$ and $O(1/\sqrt{s})$, then yields (\ref{4.44}--{\ref{4.45}).

In a similar fashion (\ref{4.46}--\ref{4.47}) is shown, although the computations are rather involved. We must be a bit careful with $T_2^R$ because of the denominator $1-\gamma{\cal L}$ that appears in (\ref{4.47}). Recall that in the case that $f(x)=1$, $x\geq0$, we have that $1-\gamma{\cal L}=0=\gamma{\cal L}+{\cal M}$, and so $T_1^R=T_2^R=0$. In the case that $f(x_0)<1$ for some $x_0\geq0$, it is easy to show from $f(0)=1$, non-negativity and decreasingness of $f(x)$ that for any compact $C\subset(\gamma_{\min},\infty)$
\begin{equation} \label{4.51}
\max_{\gamma\in C}\,\gamma{\cal L}(\gamma)<1.
\end{equation}
This yields uniform validity of the $O(1/s\sqrt{s})$ in (\ref{4.46}) when $\gamma$ is restricted to a compact subset of $(\gamma_{\min},\infty)$.

\section{Proof of \refProposition{propallA}}\label{proofpropallA}

We assume that $a(x)$ is non-negative and non-decreasing.
Define
\begin{equation} \label{644}
a^{\leftarrow}(y) = \sup{ \{ x \geq 0 \, | \, a(x) \leq y \} }
\end{equation}
for $y\geq a(0)$. This generalized inverse function is continuous from the right at all $y$ such that $a^{\leftarrow}(y)$ is finite. Furthermore note that $a(x)\leq y$ when $x<a^{\leftarrow}(y)$.

\begin{lem} \label{lemmm}
Let $s = 1, 2, \ldots$, and denote for $n = 1, 2, \ldots$
\begin{equation} \label{645}
S_s(n)=\sum_{k=0}^n \ln \Big(1+\frac{1}{\sqrt{s}}a\Big(\frac{k+1}{\sqrt{s}}\Big)\Big).
\end{equation}
Also, let $\delta\in(0,1/2)$. Then
\begin{equation} \label{646}
0\leq S_s(n)-\sqrt{s}\int_{0}^{\frac{n+1}{\sqrt{s}}} \ln \Big(1+\frac{1}{\sqrt{s}}a(x)\Big)dx
\leq \frac12 s^{\delta- \frac{1}{2} }.
\end{equation}
when $n+1 < \sqrt{s} a^{\leftarrow}( s^\delta / 2 )$. Furthermore, define
\begin{equation} \label{647}
A(x)=\int_0^x a^2(u)du, \quad x\geq 0.
\end{equation}
Then, except in the trivial case $a \equiv 0$, $A(x)$ is continuous and strictly increasing from $0$ at $x_0:=\sup{ \{ x\geq 0 \, | \, a(x)=0 \} }$ to $\infty$ at $x=\infty$. Furthermore,
\begin{align} \label{648}
0&\leq \int_{0}^{\frac{n+1}{\sqrt{s}}} a(x)dx-\sqrt{s}\int_{0}^{\frac{n+1}{\sqrt{s}}} \ln \Big(1+\frac{1}{\sqrt{s}}a(x)\Big)dx\leq \frac12 s^{\delta- \frac{1}{2} }
\end{align}
when $n+1<\sqrt{s}A^{\leftarrow}(s^{ \frac{1}{2} -\delta})$.
\end{lem}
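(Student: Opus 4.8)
The proof rests on two elementary facts about $u\ge 0$: the bound $\ln(1+u)\le u$, and the two-sided estimate $0\le u-\ln(1+u)\le \tfrac12 u^2$, the latter obtained by noting that $u\mapsto \ln(1+u)-u+\tfrac12 u^2$ has derivative $u^2/(1+u)\ge 0$ and vanishes at $u=0$. Throughout I would write $g_s(x)=\ln\bigl(1+\tfrac{1}{\sqrt s}a(x)\bigr)$, which inherits non-negativity and non-decreasingness from $a$. Then $S_s(n)=\sum_{k=0}^n g_s\bigl(\tfrac{k+1}{\sqrt s}\bigr)$ and the integral $\sqrt s\int_0^{(n+1)/\sqrt s}g_s(x)\,dx=\sqrt s\sum_{k=0}^n\int_{k/\sqrt s}^{(k+1)/\sqrt s}g_s(x)\,dx$ are to be compared subinterval by subinterval.

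For the chain \refEquation{646}, the lower bound follows because $g_s$ is non-decreasing, so on each interval $[\tfrac{k}{\sqrt s},\tfrac{k+1}{\sqrt s}]$ the right endpoint value $g_s\bigl(\tfrac{k+1}{\sqrt s}\bigr)$ dominates the integral average, and summing gives $S_s(n)\ge \sqrt s\int_0^{(n+1)/\sqrt s}g_s$. For the upper bound, the difference equals $\sum_{k=0}^n\sqrt s\int_{k/\sqrt s}^{(k+1)/\sqrt s}\bigl(g_s(\tfrac{k+1}{\sqrt s})-g_s(x)\bigr)\,dx$, and on $[\tfrac{k}{\sqrt s},\tfrac{k+1}{\sqrt s}]$ the integrand is at most $g_s(\tfrac{k+1}{\sqrt s})-g_s(\tfrac{k}{\sqrt s})$, so the sum telescopes to $g_s(\tfrac{n+1}{\sqrt s})-g_s(0)\le g_s(\tfrac{n+1}{\sqrt s})=\ln\bigl(1+\tfrac{1}{\sqrt s}a(\tfrac{n+1}{\sqrt s})\bigr)$. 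This is where the hypothesis enters: $n+1<\sqrt s\,a^{\leftarrow}(s^\delta/2)$ forces $\tfrac{n+1}{\sqrt s}<a^{\leftarrow}(s^\delta/2)$, hence $a(\tfrac{n+1}{\sqrt s})\le s^\delta/2$ by the defining property of $a^{\leftarrow}$ recalled before the lemma, and $\ln(1+u)\le u$ then yields the bound $\tfrac12 s^{\delta-1/2}$.

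For the statements about $A(x)=\int_0^x a^2(u)\,du$: since $a$ is non-decreasing it is locally bounded, so $A$ is absolutely continuous. Assuming $a\not\equiv 0$, for $x<x_0$ monotonicity together with $a\ge 0$ forces $a(x)=0$, hence $A\equiv 0$ on $[0,x_0]$; for any $x_1>x_0$ one has $a(x_1)>0$, so for $x_0\le x_1<x_2$ we get $A(x_2)-A(x_1)\ge a^2(x_1)(x_2-x_1)>0$, giving strict monotonicity past $x_0$, while $A(x)\ge a^2(x_1)(x-x_1)\to\infty$. Thus $A$ restricts to a continuous strictly increasing bijection on $[x_0,\infty)$, $A^{\leftarrow}$ is a genuine inverse on the relevant range, and $A(x)\le y$ whenever $x<A^{\leftarrow}(y)$. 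For \refEquation{648}, I would write the left-hand side as $\sqrt s\int_0^{(n+1)/\sqrt s}\bigl(\tfrac{a(x)}{\sqrt s}-\ln(1+\tfrac{a(x)}{\sqrt s})\bigr)\,dx$ and apply the two-sided bound for $u-\ln(1+u)$ pointwise with $u=a(x)/\sqrt s$: the lower bound gives non-negativity, the upper bound gives $\le \tfrac{1}{2\sqrt s}\int_0^{(n+1)/\sqrt s}a^2(x)\,dx=\tfrac{1}{2\sqrt s}A(\tfrac{n+1}{\sqrt s})$, and the hypothesis $n+1<\sqrt s\,A^{\leftarrow}(s^{1/2-\delta})$ gives $A(\tfrac{n+1}{\sqrt s})\le s^{1/2-\delta}$, leaving $\tfrac12 s^{-\delta}$, which is exactly the asserted $\tfrac12 s^{\delta-1/2}$ at the value $\delta=1/4$ used afterwards.

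I do not expect a genuine obstacle: the content is in assembling the elementary inequalities and in tracking the two hypotheses on $n$ so that precisely $a(\tfrac{n+1}{\sqrt s})$ and $A(\tfrac{n+1}{\sqrt s})$ are the quantities that get controlled. The one spot that demands care is the telescoping step in \refEquation{646}, where one must exploit monotonicity of $g_s$ on each subinterval rather than any derivative bound, since $a$ is assumed only monotone, not differentiable; the analogous handling of the generalized inverses $a^{\leftarrow}$ and $A^{\leftarrow}$ is then routine.
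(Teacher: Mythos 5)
Your proof is correct and follows essentially the same route as the paper's: for \refEquation{646} a monotonicity-based comparison of upper and lower Riemann sums (your telescoping is just a repackaging of the paper's index-shift argument, both isolating the single boundary term $\ln\bigl(1+\tfrac{1}{\sqrt{s}}a(\tfrac{n+1}{\sqrt{s}})\bigr)$ and controlling it via $a(\tfrac{n+1}{\sqrt{s}})\leq s^{\delta}/2$), and for \refEquation{648} the pointwise inequality $u-\tfrac12 u^2\leq\ln(1+u)\leq u$ integrated to produce $\tfrac{1}{2\sqrt{s}}A(\tfrac{n+1}{\sqrt{s}})$. You are also right to flag that the argument yields the bound $\tfrac12 s^{-\delta}$ in \refEquation{648} rather than the stated $\tfrac12 s^{\delta-1/2}$; the paper's own proof ends at the same place, and the two expressions coincide only at $\delta=1/4$, which is the value used in the subsequent application.
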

\begin{proof}
Since $a(x)$ is non-decreasing in $x\geq 0$, we have that
 $S_s(n)/\sqrt{s}$ is an upper Riemann sum for
\begin{equation} \label{F6}
\int_{0}^{\frac{n+1}{\sqrt{s}}} \ln \Big(1+\frac{1}{\sqrt{s}}a(x)\Big)dx,
\end{equation}
while
\begin{equation} \label{F7}
\frac{1}{\sqrt{s}} S_s(n-1) = \frac{1}{\sqrt{s}} S_s(n) - \frac{1}{\sqrt{s}} \ln{ \Big(1+\frac{1}{\sqrt{s}}a\Big(\frac{n+1}{\sqrt{s}}\Big)\Big) }
\end{equation}
is a lower Riemann sum for
\begin{equation} \label{F8}
\int_{ \frac{1}{\sqrt{s}} }^{\frac{n+1}{\sqrt{s}}} \ln \Big(1+\frac{1}{\sqrt{s}}a(x)\Big)dx.
\end{equation}
It follows that
\begin{align} \label{F9}
&\sqrt{s}\int_{0}^{\frac{n+1}{\sqrt{s}}} \ln \Big(1+\frac{1}{\sqrt{s}}a(x)\Big)dx
\leq S_s(n)\nonumber\\
&\leq \sqrt{s}\int_{ \frac{1}{\sqrt{s}} }^{\frac{n+1}{\sqrt{s}}} \ln \Big(1+\frac{1}{\sqrt{s}}a(x)\Big)dx +\ln \Big(1+\frac{1}{\sqrt{s}}a\Big(\frac{n+1}{\sqrt{s}}\Big)\Big)\nonumber\\
&\leq \sqrt{s}\int_{0}^{\frac{n+1}{\sqrt{s}}} \ln \Big(1+\frac{1}{\sqrt{s}}a(x)\Big)dx +\ln \Big(1+\frac{1}{\sqrt{s}}a\Big(\frac{n+1}{\sqrt{s}}\Big)\Big),
\end{align}
where in the last inequality $a(x)\geq 0$ has been used. Now
\begin{equation} \label{F10}
\ln \Big(1+\frac{1}{\sqrt{s}}a\Big(\frac{n+1}{\sqrt{s}}\Big)\Big)\leq \frac{1}{\sqrt{s}}a\Big(\frac{n+1}{\sqrt{s}}\Big)\leq \frac12 s^{\delta- \frac{1}{2} }
\end{equation}
when $n+1<\sqrt{s}a^{\leftarrow}( s^{\delta} / 2 )$. This yields \refEquation{646}.

As for \refEquation{648}, we note that
\begin{equation} \label{651}
a(x)-\frac{1}{2\sqrt{s}}a^2(x)\leq \sqrt{s}\ln \Big(1+\frac{1}{\sqrt{s}}a(x)\Big)\leq a(x).
\end{equation}
Hence,
\begin{equation} \label{652}
0\leq \int_{0}^{\frac{n+1}{\sqrt{s}}} a(x)dx-\sqrt{s}\int_{0}^{\frac{n+1}{\sqrt{s}}} \ln \Big(1+\frac{1}{\sqrt{s}}a(x)\Big)dx
\leq \frac{1}{2\sqrt{s}} \int_{0}^{\frac{n+1}{\sqrt{s}}}a^2(x)dx\leq \frac12 s^{-\delta}
\end{equation}
when $n+1<\sqrt{s}A^{\leftarrow}(s^{ \frac{1}{2} - \delta})$.
\end{proof}

The proof of \refProposition{propallA} follows now from \refLemma{lemmm} with $\delta=1/4$ and taking $\psi(s) = \min \{ a^{\leftarrow}( s^{\delta} / 2 ), A^{\leftarrow}(s^{ \frac{1}{2} - \delta }) \}$.

\end{document}